\numberwithin{equation}{section}
\declaretheorem[name=Theorem, sibling=equation]{theorem}
\declaretheorem[name=Lemma, sibling=equation]{lemma}
\declaretheorem[name=Proposition, sibling=equation]{proposition}
\declaretheorem[name=Definition, style=definition, sibling=equation]{definition}
\declaretheorem[name=Notation, style=definition, sibling=equation]{notation}
\declaretheorem[name=Remark, style=definition, sibling=equation]{remark}
\DeclareMathOperator{\reg}{reg}
\DeclareMathOperator{\Mod}{-Mod}
\DeclareMathOperator{\HH}{H}
\DeclareMathOperator{\Ob}{Ob}
\newcommand{\N}{{\mathbb{N}}}
\newcommand{\Z}{{\mathbb{Z}}}
\newcommand{\F}{{\mathbb{F}}}
\newcommand{\hor}{{\mathrm{hor}}}
\newcommand{\ver}{{\mathrm{ver}}}
\newcommand{\VI}{{\mathrm{VI}}}
\newcommand{\FI}{{\mathrm{FI}}}
\newcommand{\ZZ}{{\mathbb{Z}_{\geqslant -1}}}
\newcommand{\SSigma}{{\overline{\Sigma}}}
\title{Bounding regularity of $\VI^m$-modules}
\author{Wee Liang Gan}
\author{Khoa Ta}
\begin{document}

\begin{abstract}
   Fix a finite field $\F$. Let $\VI$ be a skeleton of the category of finite dimensional $\F$-vector spaces and injective $\F$-linear maps. We study $\VI^m$-modules over a noetherian commutative ring in the nondescribing characteristic case. We prove that if a finitely generated $\VI^m$-module is generated in degree $\leqslant d$ and related in degree $\leqslant r$, then its regularity is bounded above by a function of $m$, $d$, and $r$. A key ingredient of the proof is a shift theorem for finitely generated $\VI^m$-modules.
\end{abstract}

\maketitle

\section*{Introduction} 

Fix a finite field $\F$ of order $q$. Fix also a noetherian commutative ring $\Bbbk$ such that $q$ is invertible in $\Bbbk$. 

Let $\N=\{0, 1, 2, \ldots\}$. For each $n\in \N$, let $\F^n$ be the $\F$-vector space of column vectors with $n$ entries, all entries lying in $\F$. In particular, $\F^0$ is the zero vector space. 

Let $\VI$ be the category defined as follows:
\begin{itemize}
   \item the set of objects of $\VI$ is $\N$;
   \item for all $a, b\in \N$, the morphisms in $\VI$ from $a$ to $b$ are the injective $\F$-linear maps from $\F^a$ to $\F^b$;
   \item composition of morphisms is composition of the maps.
\end{itemize} 
Let $m$ be an integer $\geqslant 1$ and $\VI^m$ the product of $m$ copies of $\VI$. A \emph{$\VI^m$-module} is a functor from $\VI^m$ to the category of $\Bbbk$-modules. A homomorphism of $\VI^m$-modules is a natural transformation of functors. Sam and Snowden have proven that every finitely generated $\VI^m$-module is noetherian (see Theorem 1.1.3, Proposition 4.3.5, and Theorem 8.3.1 in \cite{sams}). Thus the category of finitely generated $\VI^m$-modules is an abelian category. 

The main result of our present article says that if a finitely generated $\VI^m$-module is generated in degree $\leqslant d$ and related in degree $\leqslant r$, then its (Castelnuovo-Mumford) regularity is bounded above by a function $\rho_m(d,r)$. The category $\VI$ is an analog of a skeleton of the category $\FI$ of finite sets and injective maps. In our previous article \cite{gta}, we proved that the same function $\rho_m(d,r)$ gives an upper bound on the regularity of any $\FI^m$-module over an arbitrary commutative ring if it is generated in degree $\leqslant d$ and related in degree $\leqslant r$. While we will use some arguments from \cite{gta} in this article, the method of proof in \cite{gta} for $\FI^m$-modules does not work for $\VI^m$-modules; we discuss where it fails for $\VI^m$-modules in Remark \ref{rem: pf in gta}. 

Recall that in the $m=1$ case, Nagpal \cite{na}*{Theorem 5.11} proved that finitely generated $\VI$-modules have finite regularity. Subsequently, Gan and Li \cite{gl_vi}*{Theorem 3.2} proved that if a finitely generated $\VI$-module is generated in degree $\leqslant d$ and related in degree $\leqslant r$, then its regularity is bounded above by $\max\{ d, d+r-1 \}$, which is the same bound for $\FI$-modules proved by Church and Ellenberg \cite{ce}*{Theorem A}. The proof of the result of Gan and Li \cite{gl_vi}*{Theorem 3.2} used the shift theorem for finitely generated $\VI$-modules due to Nagpal \cite{na}*{Theorem 4.38}. To prove our main result, we shall use a shift theorem for finitely generated $\VI^m$-modules.

This article is organized as follows. In \S\ref{sec:prelim}, we recall basic definitions and two spectral sequences constructed in \cite{gta}; we also prove some preliminary results.
In \S\ref{sec: shift functors}, we study the natural shift functors $\Sigma_i$ and the modified shift functors $\SSigma_i$ on the category of $\VI^m$-modules. 
In \S\ref{sec: the functors Ki and Di}, we study the functors $K_i$ and $D_i$; the functor $K_i$ is the kernel of a natural morphism $\mathrm{id} \to \Sigma_i$, while the functor $D_i$ is the cokernel of a natural morphism $\mathrm{id} \to \SSigma_i$.
In \S\ref{sec: shift theorem}, we prove a shift theorem (Theorem \ref{thm: shift}) for finitely generated $\VI^m$-modules.
In \S\ref{sec: upper bound}, we prove our main result (Theorem \ref{thm: regularity}) giving an upper bound on the regularity of a finitely generated $\VI^m$-modules. 

\begin{notation}
   \begin{enumerate}
      \item We denote by $\ZZ$ the set $\{-1\}\cup\N$. 

      \item For any $\Bbbk$-module $A$ and group $G$ acting $\Bbbk$-linearly on $A$, we write $A_G$ for the quotient of $A$ by the $\Bbbk$-submodule generated by all elements of the form $g(a) - a$ where $g\in G$ and $a\in A$. 
      
      \item For any $n\in \N$, we write $I_n$ for the identity map on $\F^n$.  
   \end{enumerate}

\end{notation}

\section{Preliminaries} \label{sec:prelim}

\subsection{Principal projective modules}
The set of objects of $\VI^m$ is $\N^m$. For any $\mathbf{a}, \mathbf{b}\in \N^m$, we write $\VI^m(\mathbf{a}, \mathbf{b})$ for the set of morphisms in $\VI^m$ from $\mathbf{a}$ to $\mathbf{b}$. 

Let $V$ be any $\VI^m$-module. For each $\mathbf{a}\in \N^m$, we write $V_{\mathbf{a}}$ for $V(\mathbf{a})$. For each $f\in \VI^m(\mathbf{a},\mathbf{b})$, we write $f_*$ for the map $V(f): V_{\mathbf{a}} \to V_{\mathbf{b}}$. 

For each $\mathbf{n}\in \N^m$, let $M^{\VI^m}(\mathbf{n})$ be the $\VI^m$-module defined as follows:
\begin{itemize}
   \item for each $\mathbf{a}\in \N^m$, let $M^{\VI^m}(\mathbf{n})_{\mathbf{a}}$ be the free $\Bbbk$-module with basis $\VI^m(\mathbf{n}, \mathbf{a})$;
   \item for each $g\in \VI^m(\mathbf{a}, \mathbf{b})$, let $g_*: M^{\VI^m}(\mathbf{n})_{\mathbf{a}} \to M^{\VI^m}(\mathbf{n})_{\mathbf{b}}$ be the $\Bbbk$-linear map sending each $f\in \VI^m(\mathbf{n}, \mathbf{a})$ to the composition $gf\in \VI^m(\mathbf{n},\mathbf{b})$.
\end{itemize}
Then $M^{\VI^m}(\mathbf{n})$ is a projective $\VI^m$-module called a \emph{principal projective $\VI^m$-module}. We say that a $\VI^m$-module is \emph{free} if it is isomorphic to a direct sum of principal projective $\VI^m$-modules. A $\VI^m$-module is finitely generated if and only if it is a homomorphic image of a finite direct sum of principal projective $\VI^m$-modules.

Let $V^{(1)}, V^{(2)}, \ldots, V^{(m)}$ be any $\VI$-modules. The external tensor product 
\[ V^{(1)}\boxtimes V^{(2)}\boxtimes \cdots\boxtimes V^{(m)} \] 
is a $\VI^m$-module with
\[ \left(V^{(1)}\boxtimes V^{(2)}\boxtimes \cdots\boxtimes V^{(m)}\right)_{(a_1, a_2, \ldots, a_m)} = V^{(1)}_{a_1}\otimes_{\Bbbk} V^{(2)}_{a_2}\otimes_{\Bbbk} \cdots\otimes_{\Bbbk} V^{(m)}_{a_m} \] 
for all $(a_1, a_2, \ldots, a_m)\in \N^m$. For any morphism $f=(f_1, f_2, \ldots, f_m)$ in $\VI^m(\mathbf{a}, \mathbf{b})$, 
\[ f_*:  \left(V^{(1)}\boxtimes V^{(2)}\boxtimes \cdots\boxtimes V^{(m)}\right)_{\mathbf{a}} \to  \left(V^{(1)}\boxtimes V^{(2)}\boxtimes \cdots\boxtimes V^{(m)}\right)_{\mathbf{b}} \]
is the map $(f_1)_* \otimes (f_2)_* \otimes \cdots\otimes (f_m)_*$.  

For each $\mathbf{n}=(n_1, n_2, \ldots, n_m)\in \N^m$, we have a canonical isomorphism:
\begin{equation} \label{eq: tensor}
   M^{\VI^m}(\mathbf{n}) \cong M^{\VI}(n_1) \boxtimes M^{\VI}(n_2) \boxtimes \cdots \boxtimes M^{\VI}(n_m).
\end{equation}

\subsection{Homology}
We write $\VI^m\Mod$ for the category of $\VI^m$-modules. 

Let $V$ be any $\VI^m$-module. For each $\mathbf{n}\in \N^m$, let $\widetilde{V}_{\mathbf{n}}$ be the $\Bbbk$-submodule of $V_{\mathbf{n}}$ generated by 
\[ \left\{ f_*(v) \ \Big|\ \mathbf{a}\in \N^m,\ \mathbf{a}\neq \mathbf{n},\ f\in \VI^m(\mathbf{a}, \mathbf{n}),\ v\in V_{\mathbf{a}} \right\}. \]
The assignment $\mathbf{n}\mapsto \widetilde{V}_{\mathbf{n}}$ defines a $\VI^m$-submodule $\widetilde{V}$ of $V$. Now define a functor
\[ \HH^{\VI^m}_0 : \VI^m\Mod \to \VI^m\Mod \] 
by $ \HH^{\VI^m}_0(V)=V/\widetilde{V}$. Then $\HH^{\VI^m}_0$ is a right exact functor. For each integer $i\geqslant 1$, let 
\[ \HH^{\VI^m}_i : \VI^m\Mod \to \VI^m\Mod \] 
be the $i$-th left derived functor of $\HH^{\VI^m}_0$.

We say that a $\VI^m$-module $V$ is \emph{homology acyclic} if $\HH^{\VI^m}_i(V)=0$ for all $i\geqslant 1$. For example, free $\VI^m$-modules are homology acyclic.

\subsection{Regularity}
For each $\mathbf{n}=(n_1, \ldots, n_m)\in \N^m$, set 
\[ |\mathbf{n}| = n_1 + \cdots + n_m \in \N. \]

Let $V$ be any $\VI^m$-module. The \emph{degree} $\deg V$ of $V$ is defined by 
\[ \deg V = \begin{cases} 
               \sup \{ |\mathbf{n}| \mid V_\mathbf{n} \neq 0 \} & \mbox{ if } V\neq 0,\\
               -1 & \mbox{ if } V=0.
            \end{cases} \]
We have $\deg V \in \ZZ \cup \{\infty\}$. We say that $V$ has \emph{finite degree} if $\deg V < \infty$. 

For each $i\in \N$, let 
\[ t_i(V) = \deg \HH^{\FI^m}_i(V). \] 

Let $d\in\ZZ$. We say that $V$ is \emph{generated in degree $\leqslant d$} if $t_0(V) \leqslant d$. Equivalently, $V$ is generated in degree $\leqslant d$ if and only if there exists an epimorphism $P \to V$ where $P$ is a free $\VI^m$-module of the form 
\[ P = \bigoplus_{j\in J} M^{\VI^m}(\mathbf{n}_j), \]
where $J$ is some indexing set and $|\mathbf{n}_j| \leqslant d$ for all $j\in J$.

Let $d,r\in\ZZ$. We say that $V$ is \emph{generated in degree $\leqslant d$ and related in degree $\leqslant r$} if there exists a short exact sequence
\[ 0 \to U \to P \to V \to 0 \]
where $P$ is a free $\VI^m$-module generated in degree $\leqslant d$, and $U$ is a $\VI^m$-module generated in degree $\leqslant r$. We can choose $P$ with $t_0(P)=t_0(V)$ and in this case, from the long exact sequence induced by the above short exact sequence, we have 
\[ t_1(V) \leqslant t_0(U) \leqslant \max\{ t_0(V), t_1(V) \}. \]
In particular, $t_1(V) \leqslant r$.  

The \emph{regularity} $\reg(V)$ of $V$ is defined by 
\[ \reg(V) = \sup \{ t_i(V)-i \mid i\geqslant 0\}. \]

\subsection{Spectral sequences}
Assume that $m\geqslant 2$ and consider $\VI^m$ as the product category $\VI\times \VI^{m-1}$. 

Let $n_1\in \N$ and $(n_2, \ldots, n_m)\in \N^{m-1}$. Then we have inclusion functors:
\begin{align*}   
   \VI &\to \VI^m, \quad a_1 \mapsto (a_1, n_2, \ldots, n_m);\\
   \VI^{m-1} &\to \VI^m, \quad (a_2, \ldots, a_m) \mapsto (n_1, a_2, \ldots, a_m).
\end{align*}
They induce, respectively, the restriction functors:
\begin{align*}   
   \VI^m\Mod &\to \VI\Mod, \quad V\mapsto V_{(-, n_2, \ldots, n_m)};\\
   \VI^m\Mod &\to \VI^{m-1}\Mod, \quad V\mapsto V_{(n_1, -)}.
\end{align*}

For each integer $i\geqslant 0$, the authors defined in \cite{gta} a pair of functors 
\begin{align*} 
   \HH^\hor_i &: \VI^m\Mod \to \VI^m\Mod,\\
   \HH^\ver_i &: \VI^m\Mod \to \VI^m\Mod
\end{align*}
called the \emph{$i$-th horizontal homology} of $V$ and the \emph{$i$-th vertical homology} of $V$, respectively.
Let us briefly recall the definition of the horizontal homology; the definition of the vertical homology is similar. 

\begin{notation} \label{horizontal submodule}
   For any $\VI^m$-module $V$ and $\mathbf{n}=(n_1, \ldots, n_m)\in \N^m$, let $V^\hor_{\mathbf{n}}$ be the $\Bbbk$-submodule of $V_{\mathbf{n}}$ generated by all elements of the form 
   \[ (f, I_{n_2}, \ldots, I_{n_m})_*(v) \]
   for some $a_1\in \N$, $a_1<n_1$, $f\in \VI(a_1, n_1)$, $v\in V_{(a_1, n_2, \ldots, n_m)}$.
\end{notation}

Let $V$ be any $\VI^m$-module. The assignment $\mathbf{n} \mapsto V^\hor_{\mathbf{n}}$ defines a $\VI^m$-submodule $V^\hor$ of $V$. Now define the functor
\[ \HH^\hor_0 : \VI^m\Mod \to \VI^m\Mod, \quad V\mapsto V/V^\hor. \] 
Then $\HH^\hor_0$ is a right exact functor. For each integer $i\geqslant 1$, let $\HH^\hor_i$ be the $i$-th left derived functor of $\HH^\hor_0$. 

\begin{lemma} \label{lem: hor prin proj res}
   Assume $m\geqslant 2$ and consider $\VI^m$ as the product category $\VI\times \VI^{m-1}$. Let $\mathbf{n}\in \N^m$ and $a_1\in \N$. 
   Then $(\HH^\hor_0(M^{\VI^m}(\mathbf{n})))_{(a_1, -)}$ is a free $\VI^{m-1}$-module generated in degree $\leqslant |\mathbf{n}|$. 
\end{lemma}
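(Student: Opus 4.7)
The plan is to exploit the external tensor decomposition in \eqref{eq: tensor} to reduce the horizontal homology computation to the first $\VI$-factor, and then identify the resulting slice directly.

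By \eqref{eq: tensor}, $M^{\VI^m}(\mathbf{n}) \cong M^{\VI}(n_1)\boxtimes M^{\VI^{m-1}}(n_2,\ldots,n_m)$. Because the submodule introduced in Notation \ref{horizontal submodule} is generated only by morphisms of the form $(f, I_{n_2}, \ldots, I_{n_m})$, which act in the first tensor factor, a direct unpacking shows that for any $\VI$-module $V^{(1)}$ and any $\VI^{m-1}$-module $W$ there is a natural identification $\HH^\hor_0(V^{(1)}\boxtimes W) \cong \HH^{\VI}_0(V^{(1)})\boxtimes W$; the content is simply that $(V^{(1)}\boxtimes W)^\hor_{(b_1, b_2, \ldots, b_m)}$ equals $\widetilde{V^{(1)}}_{b_1}\otimes_\Bbbk W_{(b_2, \ldots, b_m)}$. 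Restricting to the slice $(a_1,-)$ then yields
\[ \bigl(\HH^\hor_0(M^{\VI^m}(\mathbf{n}))\bigr)_{(a_1,-)} \;\cong\; \HH^{\VI}_0\bigl(M^{\VI}(n_1)\bigr)_{a_1}\otimes_\Bbbk M^{\VI^{m-1}}(n_2,\ldots,n_m), \]
where the first tensor factor carries only a $\Bbbk$-module structure.

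It remains to identify $\HH^{\VI}_0(M^{\VI}(n_1))_{a_1}$ as a $\Bbbk$-module. If $a_1 < n_1$ then $\VI(n_1, a_1)$ is empty, so the slice is $0$. If $a_1 > n_1$, then any injection $\F^{n_1}\hookrightarrow \F^{a_1}$ factors through the intermediate space $\F^{n_1}$ via the identity, and since $n_1 < a_1$ this exhibits the injection as an element of $\widetilde{M^{\VI}(n_1)}_{a_1}$, so the quotient again vanishes. If $a_1 = n_1$ no such proper factorization is possible, and $\HH^{\VI}_0(M^{\VI}(n_1))_{n_1} = \Bbbk[GL_{n_1}(\F)]$, a finite free $\Bbbk$-module. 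Consequently the slice $\bigl(\HH^\hor_0(M^{\VI^m}(\mathbf{n}))\bigr)_{(a_1,-)}$ is either zero or a direct sum of $|GL_{n_1}(\F)|$ copies of $M^{\VI^{m-1}}(n_2,\ldots,n_m)$; in either case it is a free $\VI^{m-1}$-module generated in degree $|\mathbf{n}|-n_1 \leqslant |\mathbf{n}|$, as required. No serious obstacle arises; the only mild point of care is the $\boxtimes$/$\HH^\hor_0$ compatibility noted above, which is immediate from the definitions.
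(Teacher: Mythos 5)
Your proof is correct, and it reaches the same explicit answer as the paper ($0$ when $a_1\neq n_1$, and a direct sum of $|\VI(n_1,n_1)|$ copies of $M^{\VI^{m-1}}((n_2,\ldots,n_m))$ when $a_1=n_1$), but it is organized differently. The paper argues by a direct case analysis: for $a_1\neq n_1$ the slice vanishes, and for $a_1=n_1$ the horizontal submodule contributes nothing (its generators come from strictly smaller first coordinates, where the principal projective is zero), so the slice is just $(M^{\VI^m}(\mathbf{n}))_{(n_1,-)}$, which is identified by citing \cite{gta}*{Lemma 2.3}. You instead route through the external tensor decomposition \eqref{eq: tensor}, commute $\HH^\hor_0$ past $\boxtimes$, and compute $\HH^{\VI}_0(M^{\VI}(n_1))$ explicitly; this makes the proof self-contained (you effectively reprove the relevant case of \cite{gta}*{Lemma 2.3}) at the cost of an extra compatibility statement. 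One small imprecision there: in general $(V^{(1)}\boxtimes W)^\hor_{\mathbf{b}}$ is the \emph{image} of $\widetilde{V^{(1)}}_{b_1}\otimes_\Bbbk W_{(b_2,\ldots,b_m)}$ in $V^{(1)}_{b_1}\otimes_\Bbbk W_{(b_2,\ldots,b_m)}$ rather than literally that tensor product; the identification $\HH^\hor_0(V^{(1)}\boxtimes W)\cong \HH^{\VI}_0(V^{(1)})\boxtimes W$ still follows from right exactness of $\otimes_\Bbbk$, and in your application $\widetilde{M^{\VI}(n_1)}_{b_1}$ is spanned by basis elements, so nothing is lost. Both approaches give the stated freeness and the generation bound $|\mathbf{n}|-n_1\leqslant |\mathbf{n}|$.
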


\begin{proof}
   Write $\mathbf{n}=(n_1, \ldots, n_m)$. 
   
   If $a_1\neq n_1$, then $(\HH^\hor_0(M^{\VI^m}(\mathbf{n})))_{(a_1, -)} = 0$.

   If $a_1=n_1$, then $(\HH^\hor_0(M^{\VI^m}(\mathbf{n})))_{(a_1, -)} = (M^{\VI^m}(\mathbf{n}))_{(n_1, -)}$, which by \cite{gta}*{Lemma 2.3} is isomorphic to 
   \[ \bigoplus_{f\in \VI(n_1, n_1)} M^{\VI^{m-1}}((n_2, \ldots, n_m)). \]
\end{proof}

By \cite{gta}*{Lemma 2.4}, we have:
   \begin{equation} \label{eq:hor and ver homology}
      \begin{aligned}
         (\HH^\hor_i (V))_{(-, n_2, \ldots, n_m)} &\cong \HH^{\VI}_i(V_{(-, n_2, \ldots, n_m)}),\\
         (\HH^\ver_i (V))_{(n_1, -)} &\cong \HH^{\VI^{m-1}}_i(V_{(n_1, -)}).
      \end{aligned}
   \end{equation}
Moreover, by \cite{gta}*{Theorem 2.5}, there are two convergent first-quadrant spectral sequences:
   \begin{equation} \label{eq:hor and ver spectral sequences}
      \begin{aligned}
         ^{I}\!E^2_{pq} = \HH^\ver_p (\HH^\hor_q (V)) &\Rightarrow \HH^{\VI^m}_{p+q}(V),\\
         ^{II}\!E^2_{pq} = \HH^\hor_p (\HH^\ver_q (V)) &\Rightarrow \HH^{\VI^m}_{p+q}(V).
      \end{aligned}
   \end{equation}

\begin{lemma} \label{lem: spec seq degenerate}
   Assume $m\geqslant 2$ and consider $\VI^m$ as the product category $\VI\times \VI^{m-1}$. Let $V$ be a $\VI^m$-module such that for each $(n_2, \ldots, n_m)\in \N^{m-1}$, the $\VI$-module $V_{(-, n_2, \ldots, n_m)}$ is homology acyclic. Then for all $p\geqslant 0$, 
   \[ \HH^{\VI^m}_p(V) \cong \HH_p^\ver (\HH_0^\hor(V)). \]
\end{lemma}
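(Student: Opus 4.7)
The plan is to exploit the first spectral sequence in \eqref{eq:hor and ver spectral sequences}, namely
\[ {}^{I}\!E^2_{pq} = \HH^\ver_p(\HH^\hor_q(V)) \Rightarrow \HH^{\VI^m}_{p+q}(V), \]
and show that under the hypothesis it collapses onto the column $q=0$.

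First I would translate the homology-acyclicity hypothesis through the isomorphism \eqref{eq:hor and ver homology}. For every $(n_2,\ldots,n_m)\in\N^{m-1}$ and every $q\geqslant 1$, the first isomorphism in \eqref{eq:hor and ver homology} gives
\[ (\HH^\hor_q(V))_{(-, n_2, \ldots, n_m)} \cong \HH^{\VI}_q(V_{(-, n_2, \ldots, n_m)}) = 0, \]
since $V_{(-, n_2, \ldots, n_m)}$ is homology acyclic. As this holds for every tuple $(n_2,\ldots,n_m)$, we conclude $\HH^\hor_q(V)=0$ as a $\VI^m$-module for all $q\geqslant 1$.

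Consequently ${}^{I}\!E^2_{pq}=\HH^\ver_p(\HH^\hor_q(V))=0$ whenever $q\geqslant 1$, so the spectral sequence is concentrated in the single column $q=0$. A spectral sequence supported in one column degenerates at the $E^2$-page with no extension problem, yielding
\[ \HH^{\VI^m}_p(V) \cong {}^{I}\!E^2_{p,0} = \HH^\ver_p(\HH^\hor_0(V)) \]
for every $p\geqslant 0$, which is exactly the claimed isomorphism.

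There is no real obstacle here; the statement is essentially a formal consequence of the spectral sequence \eqref{eq:hor and ver spectral sequences} together with the identification \eqref{eq:hor and ver homology}. The only thing to be slightly careful about is the direction of the vanishing—one must check that the hypothesis kills the $q$-indexed (horizontal) variable in the $^{I}$-spectral sequence rather than the $^{II}$-spectral sequence, which is why we use the first and not the second sequence from \eqref{eq:hor and ver spectral sequences}.
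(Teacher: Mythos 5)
Your proposal is correct and follows exactly the paper's own argument: use \eqref{eq:hor and ver homology} to see that $\HH^\hor_q(V)=0$ for $q\geqslant 1$, hence $^{I}\!E^2_{pq}=0$ for $q\geqslant 1$, and conclude from the collapse of the first spectral sequence in \eqref{eq:hor and ver spectral sequences}. Your write-up just spells out the single-column degeneration slightly more explicitly than the paper does.
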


\begin{proof}
   By \eqref{eq:hor and ver homology}, we know that $\HH^\hor_q(V)=0$ for all $q\geqslant 1$. Thus in \eqref{eq:hor and ver spectral sequences}, we have $^{I}\!E^2_{pq}=0$ for all $q\geqslant 1$. The lemma follows.
\end{proof}

\begin{proposition} \label{prop: bound ti by 2i}
   Let $m, \alpha, \beta\in \mathbb{Z}$ with $m\geqslant 2$ and $\alpha, \beta\geqslant -1$. Let $V$ be a $\VI^m$-module such that for all $(n_1, n_2, \ldots, n_m)\in \N^m$:
   \begin{align*}
      \reg\left(V_{(-, n_2, \ldots, n_m)}\right) &\leqslant \alpha,\\
      \reg\left(V_{(n_1, -)}\right) &\leqslant \beta.
   \end{align*} 
   Then 
   \[ t_i(V) \leqslant \max\{-1, 2i + \alpha + \beta\} \quad \mbox{ for all }i\in \N. \]
\end{proposition}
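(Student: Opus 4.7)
The plan is to apply the two convergent spectral sequences in \eqref{eq:hor and ver spectral sequences} in tandem: $^{I}\!E$ will be used to bound the first coordinate of a nonvanishing multidegree, while $^{II}\!E$ will be used to bound the sum of the remaining coordinates. Adding the two contributions produces the coefficient $2i$ in the claimed bound.

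First I would dispose of the trivial case: if $\HH^{\VI^m}_i(V)=0$ then $t_i(V)=-1$ and the bound holds. Otherwise fix $\mathbf{n}=(n_1,\ldots,n_m)\in\N^m$ with $\HH^{\VI^m}_i(V)_{\mathbf{n}}\neq 0$; it then suffices to show $|\mathbf{n}|\leqslant 2i+\alpha+\beta$.

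By convergence of the first spectral sequence, there exist $p,q\geqslant 0$ with $p+q=i$ such that $\HH^\ver_p(\HH^\hor_q(V))_{\mathbf{n}}\neq 0$. From here I chase through \eqref{eq:hor and ver homology} twice. The second isomorphism of \eqref{eq:hor and ver homology} forces $(\HH^\hor_q(V))_{(n_1,-)}$ to be nonzero as a $\VI^{m-1}$-module, so $(\HH^\hor_q(V))_{(n_1,n_2',\ldots,n_m')}\neq 0$ for some $(n_2',\ldots,n_m')\in\N^{m-1}$. The first isomorphism of \eqref{eq:hor and ver homology} then identifies the $\VI$-module $(\HH^\hor_q(V))_{(-,n_2',\ldots,n_m')}$ with $\HH^{\VI}_q(V_{(-,n_2',\ldots,n_m')})$, whose $n_1$-component is therefore nonzero. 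The horizontal regularity hypothesis then yields
\[ n_1\leqslant \deg\HH^{\VI}_q(V_{(-,n_2',\ldots,n_m')})\leqslant q+\reg(V_{(-,n_2',\ldots,n_m')})\leqslant q+\alpha\leqslant i+\alpha. \]
A mirror-image chase, starting from $^{II}\!E^2_{p',q',\mathbf{n}}=\HH^\hor_{p'}(\HH^\ver_{q'}(V))_{\mathbf{n}}\neq 0$ for some $p',q'\geqslant 0$ with $p'+q'=i$, and invoking the vertical regularity hypothesis, produces $n_2+\cdots+n_m\leqslant q'+\beta\leqslant i+\beta$. Adding the two inequalities gives $|\mathbf{n}|\leqslant (i+\alpha)+(i+\beta)=2i+\alpha+\beta$, as required.

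There is no serious technical obstacle; the argument is a direct bookkeeping of the two spectral sequences, combined with two applications of \eqref{eq:hor and ver homology} per spectral sequence. The only conceptual point is to use \emph{both} spectral sequences in tandem: using only one would force a direct regularity bound on the $\VI^{m-1}$-module $(\HH^\hor_q(V))_{(n_1,-)}$ (or the $\VI$-module $(\HH^\ver_{q'}(V))_{(-,n_2,\ldots,n_m)}$), which is not at all obvious from the hypotheses.
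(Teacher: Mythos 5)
Your proposal is correct and is essentially the argument that the paper defers to (\cite{gta}*{Proposition 4.1}, with $\VI$ in place of $\FI$): at a multidegree $\mathbf{n}$ where $\HH^{\VI^m}_i(V)$ is nonzero, run both spectral sequences of \eqref{eq:hor and ver spectral sequences} and chase through \eqref{eq:hor and ver homology} to get $n_1\leqslant i+\alpha$ from $^{I}\!E$ and $n_2+\cdots+n_m\leqslant i+\beta$ from $^{II}\!E$, then add. The details check out, including the point that the regularity hypotheses are assumed for all restrictions, so they apply to the auxiliary tuples $(n_2',\ldots,n_m')$ and $n_1'$ arising in the chase.
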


We omit the proof of the above proposition since it is essentially the same as the proof of \cite{gta}*{Proposition 4.1}, with $\VI$ in place of $\FI$.

\section{Shift functors} \label{sec: shift functors}

\subsection{Natural shift functors} \label{subsec:shift}
We shall write linear maps from $\F^a$ to $\F^b$ ($a,b\in\N$) as $b\times a$ matrices; in particular, $I_a$ is the $a\times a$ identity matrix. 

Let $i\in \{1, 2, \ldots, m\}$. Define $\mathbf{e}_i\in \N^m$ by 
\[ \mathbf{e}_i = (0, \ldots, 1, \ldots, 0), \]
where $1$ is in the $i$-th coordinate. 

There is a functor $\iota_i : \VI^m \to \VI^m$ defined on objects by 
\begin{align*} 
   \iota_i : \Ob(\VI^m) &\to \Ob(\VI^m), \\ 
   \mathbf{a} &\mapsto \mathbf{a}+\mathbf{e}_i,
\end{align*}
and on morphisms by
\begin{align*}
    \iota_i : \VI^m(\mathbf{a}, \mathbf{b}) &\to \VI^m(\mathbf{a}+\mathbf{e}_i, \mathbf{b}+\mathbf{e}_i), \\
     (f_1, \ldots, f_m) &\mapsto (g_1, \ldots, g_m), 
\end{align*}
where $g_j=f_j$ for all $j\neq i$ and
\[ g_i = \left( \begin{array}{c|ccc} 
   1 & 0 & \cdots & 0 \\
   \hline
   0 &   &        &   \\
   \vdots & & f_i &   \\
   0 &   &        &  
   \end{array} \right). \]

The $i$-th \emph{natural shift functor} 
\[ \Sigma_i : \VI^m\Mod \to \VI^m\Mod \]
is defined to be the pullback via $\iota_i$. Thus, for any $\VI^m$-module $V$, we have 
\[ (\Sigma_i V)_{\mathbf{a}} = V_{\mathbf{a}+\mathbf{e}_i} \quad\mbox{ for all } \mathbf{a}\in \N^m. \]

\begin{notation} \label{nota:iota}
   When $m=1$, we write $\iota$ and $\Sigma$ for $\iota_1$ and $\Sigma_1$, respectively.
\end{notation}

For any $\VI$-modules $V^{(1)}, \ldots, V^{(m)}$, we have:
\begin{equation}\label{eq: tensor2}
\Sigma_i\left( V^{(1)}\boxtimes \cdots\boxtimes V^{(m)} \right) 
= V^{(1)}\boxtimes \cdots \boxtimes \left( \Sigma V^{(i)} \right) \boxtimes \cdots\boxtimes V^{(m)}. 
\end{equation}

\begin{proposition} \label{prop:shift_of_free}
   Let $\mathbf{n}=(n_1, \ldots, n_m)\in \N^m$. 
   
   If $n_i=0$, then 
      \[ \Sigma_i M^{\VI^m}(\mathbf{n}) \cong M^{\VI^m}(\mathbf{n}). \]
   
   If $n_i\geqslant 1$, then
      \[ \Sigma_i M^{\VI^m}(\mathbf{n}) \cong \left( M^{\VI^m}(\mathbf{n})^{\oplus k} \right) \oplus 
      \left( M^{\VI^m}(\mathbf{n}-\mathbf{e_i})^{\oplus \ell} \right) \]
   where 
   \[ k=q^{n_i}, \qquad \ell=(q^{n_i}-1)q^{{n_i}-1}. \]
\end{proposition}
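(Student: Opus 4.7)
The plan is to reduce to the single-category case by invoking \eqref{eq: tensor} and \eqref{eq: tensor2}. Since external tensor product distributes over direct sums, it suffices to verify that $\Sigma M^{\VI}(0)\cong M^{\VI}(0)$ and that, for $n\geqslant 1$,
\[ \Sigma M^{\VI}(n) \cong M^{\VI}(n)^{\oplus q^n}\oplus M^{\VI}(n-1)^{\oplus(q^n-1)q^{n-1}}. \]
The $n=0$ statement is immediate, since $M^{\VI}(0)$ is the constant functor with value $\Bbbk$.

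For $n\geqslant 1$, I would view the basis $\VI(n,a+1)$ of $(\Sigma M^{\VI}(n))_a$ as the set of $(a+1)\times n$ matrices over $\F$ of rank $n$. Unwinding the definition of $\iota$, the action of $g\in\VI(a,b)$ on $\Sigma M^{\VI}(n)$ left-multiplies $f\in\VI(n,a+1)$ by the block matrix with a $1$ in position $(1,1)$, $g$ in the lower-right $b\times a$ block, and zeros elsewhere. This preserves the first row of $f$, viewed as a linear functional $v\in(\F^n)^{\ast}$, so $\Sigma M^{\VI}(n)$ splits as a $\VI$-module decomposition
\[ \Sigma M^{\VI}(n) = \bigoplus_{v\in(\F^n)^{\ast}} N(v), \]
where $N(v)$ is spanned by those basis matrices whose first row equals $v$.

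Dropping the first row identifies $N(0)$ with $M^{\VI}(n)$. For $v\neq 0$, right multiplication by any $\varphi\in GL_n(\F)$ with $v\varphi=v'$ supplies a $\VI$-equivariant isomorphism $N(v)\cong N(v')$, so it suffices to analyze $N(e_1^{\ast})$. A rank check shows $N(e_1^{\ast})_a$ is free on pairs $(w,h)$ with $w\in\F^a$ arbitrary and $h\in\VI(n-1,a)$ — here $w$ is the first column of the bottom $a$ rows of $f$ and $h$ consists of the remaining $n-1$ columns — with the $\VI$-action acting diagonally by $g\cdot(w,h)=(g(w),g\circ h)$.

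I would then split these pairs by whether $w$ lies in the image of $h$. Injectivity of every morphism $g\in\VI(a,b)$ implies $w\in\mathrm{image}(h)$ if and only if $g(w)\in\mathrm{image}(g\circ h)$, so the split is a $\VI$-module decomposition. Pairs with $w=h(u)$ are equivariantly re-indexed by $(u,h)\in\F^{n-1}\times\VI(n-1,a)$, yielding $M^{\VI}(n-1)^{\oplus q^{n-1}}$; pairs with $w\notin\mathrm{image}(h)$ correspond equivariantly to injections $\tilde{h}\colon\F^n\to\F^a$ via $\tilde{h}(e_1)=w$ and $\tilde{h}(e_i)=h(e_{i-1})$ for $i\geqslant 2$, yielding a copy of $M^{\VI}(n)$. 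Combining, $N(e_1^{\ast})\cong M^{\VI}(n)\oplus M^{\VI}(n-1)^{\oplus q^{n-1}}$, and then $\Sigma M^{\VI}(n)\cong N(0)\oplus N(e_1^{\ast})^{\oplus(q^n-1)}$ recovers the stated formula. The only delicate point is verifying $\VI$-equivariance at each step, in particular that the image-of-$h$ split of $N(e_1^{\ast})$ really is preserved by the $\VI$-action.
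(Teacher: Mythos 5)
Your proposal is correct and follows essentially the same route as the paper: reduce to $m=1$ via \eqref{eq: tensor} and \eqref{eq: tensor2}, decompose $\Sigma M^{\VI}(n)$ according to the first row of the matrix (which the $\VI$-action preserves), and split each piece with nonzero first row into one copy of $M^{\VI}(n)$ and $q^{n-1}$ copies of $M^{\VI}(n-1)$. The only cosmetic difference is that the paper treats an arbitrary nonzero first row $\beta$ directly, indexing the $M^{\VI}(n-1)$-summands by the lines $B=\ker\gamma_f$ with $\beta|_B\neq 0$ and identifying them by factoring $\gamma_f$ through $\F^n/B$, whereas you transport to the representative $e_1^{\ast}$ by right multiplication with an element of $GL_n(\F)$ and split by whether the first column of $\gamma_f$ lies in the image of the remaining columns --- the resulting submodules and counts coincide.
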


\begin{proof}
   By \eqref{eq: tensor} and \eqref{eq: tensor2}, it suffices to prove the lemma when $m=1$. 
   
   It is plain that 
   \[ \Sigma M^{\VI}(0) \cong M^{\VI}(0). \]

   Let $n\in \N$ with $n\geqslant 1$. We need to prove that 
   \[    \Sigma M^{\VI}(n) \cong \left( M^{\VI}(n)^{\oplus q^n} \right) \oplus \left( M^{\VI}(n-1)^{\oplus (q^n-1)q^{n-1}} \right).   \]
   This is sketched in \cite{gl_coinduction}*{\S6.1}; for the convenience of the reader, we give the details below.

   Let $a\in \N$ with $a\geqslant n-1$. Let $f$ be any element of the basis $\VI(n,a+1)$ of $(\Sigma M^{\VI}(n))_a$. Then $f$ is a $(a+1)\times n$ matrix of rank $n$. Let $\beta_f$ be the first row of $f$ and let $\gamma_f$ be the $a\times n$ matrix formed by the last $a$ rows of $f$. The rank of $\gamma_f$ must be $n$ or $n-1$. If the rank of $\gamma_f$ is $n$, then $\ker \gamma_f = 0$.
   If the rank of $\gamma_f$ is $n-1$, then $\ker \gamma_f$ is a one-dimensional subspace of $\F^n$ and the restriction of the row vector $\beta_f: \F^n \to \F$ to $\ker \gamma_f$ is nonzero. 

   For each row vector $\beta: \F^n \to \F$, let $V(\beta)_a$ be the $\Bbbk$-submodule of $(\Sigma M^{\VI}(n))_a$ generated by all $f\in \VI(n, a+1)$ such that: 
   \begin{itemize}
      \item $\beta_f=\beta$,
      \item $\ker \gamma_f = 0$.
   \end{itemize}
   This defines a $\VI$-submodule $V(\beta)$ of $\Sigma M^{\VI}(n)$. It is easy to see that $V(\beta)$ is isomorphic to $M^{\VI}(n)$. There are $q^n$ choices of row vectors $\beta:\F^n \to \F$.

   For each one-dimensional subspace $B$ of $\F^n$ and row vector $\beta: \F^n\to \F$ whose restriction to $B$ is nonzero, let $W(B, \beta)_a$ be the $\Bbbk$-submodule of $(\Sigma M^{\VI}(n))_a$ generated by all $f\in \VI(n, a+1)$ such that:
   \begin{itemize}
      \item $\beta_f=\beta$,
      \item $\ker \gamma_f=B$.
   \end{itemize}
   This defines a $\VI$-submodule $W(B, \beta)$ of $\Sigma M^{\VI}(n)$. Observe that when $\ker \gamma_f=B$, the map $\gamma_f: \F^n\to \F^a$ factors uniquely through the quotient $\F^n/B$ (which is isomorphic to $\F^{n-1}$).
   From this, it is easy to see that $W(B, \beta)$ is isomorphic to $M^{\VI}(n-1)$. There are $(q^n-1)/(q-1)$ choices of one-dimensional subspaces $B$ of $\F^n$. For each such choice of $B$, there are $q^n-q^{n-1}$ choices of row vectors $\beta: \F^n \to \F$ whose restriction to $B$ is nonzero. Therefore there are $(q^n-1)q^{n-1}$ choices of such pairs $(B, \beta)$.

   We conclude from the above that $\Sigma M^{\VI}(n)$ is the direct sum of $q^n$ copies of $M^{\VI}(n)$ and $(q^n-1)q^{n-1}$ copies of $M^{\VI}(n-1)$.
\end{proof}

\begin{lemma} \label{lem:natural shifts commute}
   Let $i,j\in \{1,\ldots, m\}$. Then $\Sigma_i \Sigma_j = \Sigma_j \Sigma_i$.
\end{lemma}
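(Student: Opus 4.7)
The plan is to reduce the claim to commutativity of the underlying endofunctors $\iota_i$ and $\iota_j$ of $\VI^m$. By definition, for any $\VI^m$-module $V$ we have $\Sigma_i V = V \circ \iota_i$, so
\[ \Sigma_i \Sigma_j V = V\circ \iota_j \circ \iota_i \quad\text{and}\quad \Sigma_j\Sigma_i V = V\circ \iota_i\circ \iota_j. \]
Hence it suffices to show that $\iota_i \circ \iota_j = \iota_j \circ \iota_i$ as endofunctors of $\VI^m$.

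If $i=j$ this is immediate, so I assume $i\neq j$. On objects, both compositions send $\mathbf{a}$ to $\mathbf{a}+\mathbf{e}_i+\mathbf{e}_j$. On a morphism $(f_1,\ldots,f_m)\in \VI^m(\mathbf{a},\mathbf{b})$, the functor $\iota_k$ is defined coordinatewise: it leaves $f_\ell$ unchanged for $\ell\neq k$ and replaces $f_k$ by the block matrix $\bigl(\begin{smallmatrix} 1 & 0 \\ 0 & f_k \end{smallmatrix}\bigr)$. Because $i\neq j$, the modifications made by $\iota_i$ and $\iota_j$ act on distinct coordinates and neither depends on the other. Therefore applying $\iota_j$ then $\iota_i$ yields the tuple whose $i$-th entry is $\bigl(\begin{smallmatrix} 1 & 0 \\ 0 & f_i \end{smallmatrix}\bigr)$, whose $j$-th entry is $\bigl(\begin{smallmatrix} 1 & 0 \\ 0 & f_j \end{smallmatrix}\bigr)$, and whose remaining entries are unchanged; applying $\iota_i$ then $\iota_j$ yields the same tuple.

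There is no real obstacle here; the statement is a bookkeeping fact about the block-matrix formula defining $\iota_k$. The only thing to check is that the block-extension formula is genuinely local to the $k$-th coordinate, which is plain from the definition in \S\ref{subsec:shift}.
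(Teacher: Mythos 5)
Your proof is correct and follows the same route as the paper: the paper's proof consists precisely of the observation that $\Sigma_i\Sigma_j=\Sigma_j\Sigma_i$ follows from $\iota_i\iota_j=\iota_j\iota_i$, and you simply spell out the coordinatewise verification of that functor identity. Nothing further is needed.
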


\begin{proof}
   This follows from $\iota_i \iota_j = \iota_j \iota_i$.
\end{proof}

\subsection{Modified shift functor on $\VI\Mod$}
Following Nagpal \cite{na}, we shall define a functor 
\[ \overline{\Sigma}: \VI\Mod \to \VI\Mod \]
which turns out to be more useful than the natural shift functor. We call $\overline{\Sigma}$ the \emph{modified shift functor} on $\VI\Mod$. 

Let $a\in \N$. For each $c\in \F^a$, define $\sigma(c)\in \VI(a+1, a+1)$ by 
\[ \sigma(c) = \left( \begin{array}{c|ccc} 
   1 & 0 & \cdots & 0 \\
   \hline
    &   &        &   \\
   c & & I_a &   \\
    &   &        &  
   \end{array} \right). \]
Let 
\[ U(a)=\{\sigma(c) \mid c\in \F^a\}. \]  
Then $U(a)$ is a subgroup of $\VI(a+1, a+1)$. We have $\sigma(c) \sigma(c')=\sigma(c+c')$ for all $c,c'\in \F^a$.

Let $V$ be any $\VI$-module. The $\VI$-module $\SSigma V$ is defined as follows:
\begin{itemize}
   \item For each $a\in \N$, let
   \[ (\SSigma V)_a = (V_{a+1})_{U(a)}. \]
   \item For each $f\in \VI(a,b)$, let  
   \[ f_* : (\SSigma V)_a \to (\SSigma V)_b \]
   be the map sending the class represented by $v\in V_{a+1}$ to the class represented by $\iota(f)_*(v)$ (see Notation \ref{nota:iota}). The map $f_*$ is well-defined because for all $c\in \F^a$:   
   \[ \iota(f)\sigma(c) = \sigma(fc)\iota(f); \]
   hence for all $v\in V_{a+1}$:
   \[ \iota(f)_*(\sigma(c)_*(v) - v) =  \sigma(fc)_*(w) - w \quad \mbox{ where } w= \iota(f)_*(v). \]
\end{itemize} 

For any homomorphism $\eta:V\to W$ of $\VI$-modules, it is plain that $\Sigma\eta: \Sigma V \to \Sigma W$ descends to a homomorphism $\SSigma \eta: \SSigma V\to \SSigma W$.

Now let $n\in\N$ with $n\geqslant 1$. Let $a\in \N$ with $a\geqslant n-1$. For each $f\in \VI(n,a+1)$, let $\beta_f$ be the first row of $f$ and let $\gamma_f$ be the $a\times n$ matrix formed by the last $a$ rows of $f$. 

\begin{lemma} \label{lem:reduced_form}
   Let $f\in \VI(n,a+1)$. Suppose that $\beta_f=(\beta_1 \ \beta_2 \ \ldots \ \beta_n)$.
   \begin{enumerate}[(i)]
      \item If $\beta_f=0$, then for all $c\in \F^a$, one has $\sigma(c)f=f$.  
      \item If $\beta_f\neq 0$, then for each $j\in\{1,\ldots,n\}$ such that $\beta_j\neq 0$, there exists a unique $c\in \F^a$ such that the $j$-th column of $\gamma_{\sigma(c)f}$ is $0$.
   \end{enumerate}
\end{lemma}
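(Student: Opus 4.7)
The plan is to prove both claims by a direct block-matrix computation of $\sigma(c)f$ in terms of $\beta_f$ and $\gamma_f$.

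Writing $f$ as the block matrix with first row $\beta_f$ (size $1 \times n$) and lower block $\gamma_f$ (size $a \times n$), and writing $c \in \F^a$ as a column vector, the definition of $\sigma(c)$ gives
\[
\sigma(c) f \;=\; \begin{pmatrix} 1 & 0 \\ c & I_a \end{pmatrix} \begin{pmatrix} \beta_f \\ \gamma_f \end{pmatrix} \;=\; \begin{pmatrix} \beta_f \\ c\,\beta_f + \gamma_f \end{pmatrix}.
\]
Thus $\beta_{\sigma(c)f} = \beta_f$ always, and $\gamma_{\sigma(c)f} = c\,\beta_f + \gamma_f$, where $c\,\beta_f$ is the $a\times n$ matrix whose $(i,k)$-entry is $c_i \beta_k$.

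For part (i), if $\beta_f = 0$, then the outer product $c\,\beta_f$ vanishes for every $c\in \F^a$, so $\sigma(c)f = f$.

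For part (ii), fix $j$ with $\beta_j \ne 0$. The $j$-th column of $\gamma_{\sigma(c)f}$ equals $\beta_j c + (\gamma_f)^{(j)}$, where $(\gamma_f)^{(j)}$ denotes the $j$-th column of $\gamma_f$. Setting this to zero yields the single linear equation $c = -\beta_j^{-1} (\gamma_f)^{(j)}$, which has a unique solution in $\F^a$ since $\beta_j$ is a unit in $\F$.

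No real obstacle is anticipated: the lemma is purely a transparent $2 \times 2$ block computation, and both parts follow at once from the explicit formula $\gamma_{\sigma(c)f} = c\,\beta_f + \gamma_f$. The only thing to be careful about is the convention on which side $\sigma(c)$ acts and the block decomposition chosen for $\sigma(c)$, both of which match what is set up immediately before the lemma.
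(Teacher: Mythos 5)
Your proposal is correct and follows the paper's own argument essentially verbatim: both compute $\beta_{\sigma(c)f}=\beta_f$ and $\gamma_{\sigma(c)f}=c\,\beta_f+\gamma_f$, deduce (i) from the vanishing of the outer product when $\beta_f=0$, and deduce (ii) by solving $c\beta_j+u=0$ uniquely using that $\beta_j$ is invertible in $\F$. No gaps.
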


\begin{proof}
Let $c\in \F^a$ and $g=\sigma(c)f$. Then
\[ \beta_g = \beta_f, \quad \gamma_g = c\beta_f + \gamma_f. \]
Thus if $\beta_f=0$, then $g=f$. This proves (i).

Suppose that $\beta_j\neq 0$. Let $u$ be the $j$-th column of $\gamma_f$ and $v$ be the $j$-th column of $\gamma_g$. Then 
\[ v = c\beta_j + u. \] 
Thus $v=0$ if and only if $c=-\beta_j^{-1}u$. This proves (ii).
\end{proof}

The preceding lemma allows us to make the following definition.

\begin{definition} \label{def:reduced_form}
   For any morphism $f\in \VI(n,a+1)$, define the morphism $\overline{f}\in \VI(n,a+1)$ as follows:
   \begin{itemize}
      \item if $\beta_f=0$, then $\overline{f}=f$;
      \item if $\beta_f=(\beta_1 \ \beta_2 \ \ldots \ \beta_n)$ with $\beta_1=\cdots=\beta_{j-1}=0$ and $\beta_j\neq 0$, then $\overline{f}=\sigma(c)f$ where $c$ is the unique vector in $\F^a$ such that the $j$-th column of $\gamma_{\sigma(c)f}$ is $0$.
   \end{itemize}    
\end{definition}

\begin{proposition} \label{prop:barshift1}
   Let $n\in \N$. 
   
   If $n=0$, then 
   \[ \SSigma M^{\VI}(n) \cong M^{\VI}(n). \]    
   
   If $n\geqslant 1$, then 
   \[ \SSigma M^{\VI}(n) \cong M^{\VI}(n) \oplus \left( M^{\VI}(n-1)^{\oplus (q^n-1)} \right). \]
\end{proposition}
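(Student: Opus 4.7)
The case $n = 0$ is immediate, since $\VI(0, a+1)$ is a singleton on which $U(a)$ acts trivially, giving $\SSigma M^{\VI}(0) \cong \Sigma M^{\VI}(0) \cong M^{\VI}(0)$. For $n \geq 1$, the plan is to extract a $\Bbbk$-basis of each $(\SSigma M^{\VI}(n))_a$ from canonical $U(a)$-orbit representatives in $\VI(n, a+1)$, and then organize these representatives into $\VI$-submodules of the required types. By Lemma \ref{lem:reduced_form} and Definition \ref{def:reduced_form}, the reduced form $\overline{f}$ supplies exactly one representative in each $U(a)$-orbit: the action is trivial when $\beta_f = 0$, while for $\beta_f \neq 0$ the stabilizer $\{c \in \F^a : c\beta_f = 0\}$ is $\{0\}$ and $\overline{f}$ is pinned down by requiring the $j$-th column of $\gamma_{\overline{f}}$ to vanish, where $j = j(\beta_f)$ is the first index with $(\beta_f)_j \neq 0$. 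Consequently the classes $[\overline{f}]$ form a $\Bbbk$-basis of $(\SSigma M^{\VI}(n))_a$.

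I would then partition this basis by the value of $\beta_f$. Since $\iota(h) f$ has first row $\beta_f$ and lower block $h \gamma_f$, both $\beta_f$ and the vanishing of any prescribed column of $\gamma_f$ are preserved under the $\VI$-action. Hence the $\Bbbk$-span $X$ of classes $[\overline{f}]$ with $\beta_f = 0$ and, for each nonzero row vector $\beta$, the $\Bbbk$-span $Y_\beta$ of classes $[\overline{f}]$ with $\beta_f = \beta$ are $\VI$-submodules; together with the uniqueness of the reduced form they assemble into a direct sum decomposition of $\SSigma M^{\VI}(n)$.

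Finally, I would identify the summands. A reduced form in $X$ is determined by the injection $\gamma_f \in \VI(n, a)$ in the evident $\VI$-equivariant way, so $X \cong M^{\VI}(n)$. For nonzero $\beta$, the $j(\beta)$-th column of $\gamma_f$ is zero, so $\overline{f}$ is determined by the $a \times (n-1)$ matrix $\gamma_f'$ of the remaining columns; elementary column operations using the invertible pivot $\beta_{j(\beta)}$ to clear the entries $\beta_i$ for $i \neq j(\beta)$ show that $f$ has rank $n$ iff $\gamma_f'$ has rank $n-1$, so $Y_\beta^a$ is in $\VI$-equivariant bijection with $\VI(n-1, a)$ and $Y_\beta \cong M^{\VI}(n-1)$. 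Since there are $q^n - 1$ nonzero row vectors $\beta$, this yields the stated decomposition. The only step that requires real care is this rank-reduction bijection $Y_\beta^a \leftrightarrow \VI(n-1, a)$, but it reduces to the short matrix computation indicated above.
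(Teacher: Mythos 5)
Your proposal is correct and follows essentially the same route as the paper: both use the reduced forms $\overline{f}$ of Definition \ref{def:reduced_form} as canonical $U(a)$-orbit representatives, partition them by the first row $\beta$, and identify the $\beta=0$ piece with $M^{\VI}(n)$ and each nonzero-$\beta$ piece with $M^{\VI}(n-1)$ via deleting the forced zero column. The only (immaterial) difference is packaging: the paper realizes the pieces as submodules $P(\beta)\subseteq \Sigma M^{\VI}(n)$ and checks that inclusion followed by the quotient map is an isomorphism using $f\mapsto\overline{f}$ as an explicit inverse, whereas you work directly in the coinvariants and invoke the orbit-representative basis.
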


\begin{proof}
   The $n=0$ case is trivial so assume that $n\geqslant 1$. 

   Let $a\in \N$ with $a\geqslant n-1$. For each row vector $\beta: \F^n \to \F$, let $P(\beta)_a$ be the $\Bbbk$-submodule of $(\Sigma M^{\VI}(n))_a$ generated by all $f\in \VI(n,a+1)$ such that $\beta_f=\beta$ and $\overline{f}=f$ (see Definition \ref{def:reduced_form}). In other words:
   \begin{itemize}
      \item if $\beta=0$, then $P(\beta)_a$ is generated by all $f\in \VI(n,a+1)$ such that $\beta_f=0$;
      \item if $\beta=(\beta_1 \ \beta_2 \ \ldots \ \beta_n)$ with $\beta_1=\cdots=\beta_{j-1}=0$ and $\beta_j\neq 0$, then $P(\beta)_a$ is generated by all $f\in \VI(n,a+1)$ such that $\beta_f=\beta$ and the $j$-th column of $\gamma_f$ is $0$.
   \end{itemize}    
   This defines a $\VI$-submodule $P(\beta)$ of $\Sigma M^{\VI}(n)$. It is easy to see that:
   \begin{itemize}
      \item if $\beta=0$, then $P(\beta)$ is isomorphic to $M^{\VI}(n)$;
      \item if $\beta\neq 0$, then $P(\beta)$ is isomorphic to $M^{\VI}(n-1)$.
   \end{itemize} 
   Let $P$ be the sum of $P(\beta)$ over all row vectors $\beta: \F^n \to \F$. Then 
   \[ P \cong M^{\VI}(n) \oplus \left( M^{\VI}(n-1)^{\oplus (q^n-1)} \right). \] 
   
   Let $\eta$ be the composition of the inclusion map $P\to \Sigma M(n)$ and the quotient map $\Sigma M(n) \to \SSigma M(n)$. It suffices to show that $\eta$ is an isomorphism. Let $a\in \N$ with $a\geqslant n-1$. We need to show that $\eta_a : P_a \to (\SSigma M(n))_a$ is bijective. 

   Let $\zeta_a: (\Sigma M(n))_a \to P_a$ be the $\Bbbk$-linear map sending each morphism $f\in \VI(n,a+1)$ to $\overline{f}$. Then $\zeta_a$ factors through $(\SSigma M(n))_a$ to give a $\Bbbk$-linear map $\overline{\zeta}_a: (\SSigma M(n))_a \to P_a$. It is plain that $\eta_a$ and $\overline{\zeta}_a$ are inverses of each other, hence $\eta_a$ is bijective.
\end{proof}

Proposition \ref{prop:barshift1} is essentially due to Nagpal; see \cite{na}*{Proposition 4.12} whose statement and proof are more abstract than the one we give above. 

\subsection{Modified shift functors on $\VI^m\Mod$}
Let $i\in \{1, 2, \ldots, m\}$. We now define the $i$-th \emph{modified shift functor} 
\[ \SSigma_i : \VI^m\Mod \to \VI^m\Mod. \]

Let $\mathbf{a} = (a_1, \ldots, a_m)\in \N^m$. For each $c\in \F^{a_i}$, let 
\[ \sigma_i(c) = (g_1, \ldots, g_m) \in \VI^m(\mathbf{a}+\mathbf{e}_i, \mathbf{a}+\mathbf{e}_i) \]
where $g_j = I_{a_j}$ for all $j\neq i$ and $g_i = \sigma(c)$. Let 
\[ U_i(\mathbf{a}) = \{ \sigma_i(c) \mid c\in \F^{a_i} \}. \] 
Then $U_i(\mathbf{a})$ is a subgroup of $\VI^m(\mathbf{a}+\mathbf{e}_i, \mathbf{a}+\mathbf{e}_i)$, and $U_i(\mathbf{a})\cong U(a_i)$.

Let $V$ be any $\VI^m$-module. The $\VI^m$-module $\SSigma_i V$ is defined as follows:
\begin{itemize}
   \item For each $\mathbf{a}\in \N^m$, let
   \[ (\SSigma_i V)_{\mathbf{a}} = (V_{\mathbf{a}+\mathbf{e}_i})_{U_i(\mathbf{a})}. \]

   \item For each $f\in \VI^m(\mathbf{a},\mathbf{b})$, let  
   \[ f_* : (\SSigma_i V)_{\mathbf{a}} \to (\SSigma_i V)_{\mathbf{b}} \]
   be the map sending the class represented by $v\in V_{\mathbf{a}+\mathbf{e}_i}$ to the class represented by $\iota_i(f)_*(v)$. The map $f_*$ is well-defined because for all $c\in \F^{a_i}$:   
   \[ \iota_i(f)\sigma_i(c) = \sigma_i(fc)\iota_i(f); \]
   hence for all $v\in V_{\mathbf{a}+\mathbf{e}_i}$:
   \[ \iota_i(f)_*(\sigma_i(c)_*(v) - v) =  \sigma_i(fc)_*(w) - w \quad \mbox{ where } w= \iota_i(f)_*(v). \]
\end{itemize} 

If $\eta: V\to W$ is a homomorphism of $\VI^m$-modules, then $\Sigma_i\eta: \Sigma_i V\to \Sigma_i W$ descends to a homomorphism $\SSigma_i\eta: \SSigma_i V\to \SSigma_i W$.

For any $\VI$-modules $V^{(1)}, \ldots, V^{(m)}$, we have:
\begin{equation}\label{eq: tensor_i}
\SSigma_i\left( V^{(1)}\boxtimes \cdots\boxtimes V^{(m)} \right) 
= V^{(1)}\boxtimes \cdots \boxtimes \left( \SSigma V^{(i)} \right) \boxtimes \cdots\boxtimes V^{(m)}. 
\end{equation}

\begin{proposition} \label{prop:barshift2}
   Let $\mathbf{n}=(n_1, \ldots, n_m)\in \N^m$. 

   If $n_i=0$, then 
   \[ \SSigma_i M^{\VI^m}(\mathbf{n}) \cong M^{\VI^m}(\mathbf{n}). \]

   If $n_i\geqslant 1$, then 
   \[ \SSigma_i M^{\VI^m}(\mathbf{n}) \cong M^{\VI^m}(\mathbf{n}) \oplus \left( M^{\VI^m}(\mathbf{n}-\mathbf{e}_i)^{\oplus (q^{n_i}-1)} \right). \]
\end{proposition}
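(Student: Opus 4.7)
The plan is to reduce to the $m=1$ case already settled by Proposition \ref{prop:barshift1}, following exactly the template used in the proof of Proposition \ref{prop:shift_of_free}. By the canonical isomorphism \eqref{eq: tensor}, one has
\[ M^{\VI^m}(\mathbf{n}) \cong M^{\VI}(n_1) \boxtimes \cdots \boxtimes M^{\VI}(n_m), \]
and by the compatibility \eqref{eq: tensor_i} of $\SSigma_i$ with external tensor products, applying $\SSigma_i$ only affects the $i$-th factor:
\[ \SSigma_i M^{\VI^m}(\mathbf{n}) \cong M^{\VI}(n_1) \boxtimes \cdots \boxtimes \SSigma M^{\VI}(n_i) \boxtimes \cdots \boxtimes M^{\VI}(n_m). \]

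For the case $n_i=0$, Proposition \ref{prop:barshift1} gives $\SSigma M^{\VI}(0) \cong M^{\VI}(0)$, and the right-hand side immediately reassembles into $M^{\VI^m}(\mathbf{n})$ by a second application of \eqref{eq: tensor}. For the case $n_i \geqslant 1$, Proposition \ref{prop:barshift1} gives
\[ \SSigma M^{\VI}(n_i) \cong M^{\VI}(n_i) \oplus \left( M^{\VI}(n_i-1)^{\oplus (q^{n_i}-1)} \right). \]
Substituting this into the box-tensor expression and distributing the external tensor product over direct sums, each resulting summand of the form $M^{\VI}(n_1) \boxtimes \cdots \boxtimes M^{\VI}(n_i) \boxtimes \cdots \boxtimes M^{\VI}(n_m)$ is identified with $M^{\VI^m}(\mathbf{n})$, while each summand of the form $M^{\VI}(n_1) \boxtimes \cdots \boxtimes M^{\VI}(n_i-1) \boxtimes \cdots \boxtimes M^{\VI}(n_m)$ is identified with $M^{\VI^m}(\mathbf{n}-\mathbf{e}_i)$, both again via \eqref{eq: tensor}. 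Collecting the summands produces the claimed decomposition.

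All of the substantive combinatorial work is contained in Proposition \ref{prop:barshift1}, which has already been established; the passage from $m=1$ to general $m$ is a purely formal consequence of the identities \eqref{eq: tensor} and \eqref{eq: tensor_i}, so there is no genuine obstacle. The only point that requires any care is verifying that \eqref{eq: tensor_i} really holds at the level of the $U_i(\mathbf{a})$-coinvariants, but this is immediate from the observation that $U_i(\mathbf{a}) \cong U(a_i)$ acts only on the $i$-th tensor factor and trivially on the others, so the coinvariants commute with the $\Bbbk$-tensor product appearing in the definition of the external tensor product.
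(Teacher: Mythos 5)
Your proposal is correct and follows essentially the same route as the paper, which derives Proposition \ref{prop:barshift2} immediately from \eqref{eq: tensor}, \eqref{eq: tensor_i}, and Proposition \ref{prop:barshift1}; you have merely spelled out the formal reassembly and the (correct) observation that $U_i(\mathbf{a})$ acts only on the $i$-th tensor factor. No gaps.
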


\begin{proof}
   This is immediate from \eqref{eq: tensor}, \eqref{eq: tensor_i}, and Proposition \ref{prop:barshift1}.
\end{proof}

\begin{lemma} \label{lem:modified shifts commute}
   Let $i,j\in \{1,\ldots, m\}$. Then $\SSigma_i \SSigma_j \cong \SSigma_j \SSigma_i$.
\end{lemma}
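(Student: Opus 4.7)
The plan is to reduce the statement to the elementary commutativity of coinvariants under two commuting group actions. The case $i=j$ is trivial, so assume $i\neq j$. Unpacking the definition of $\SSigma$, for each $\mathbf{a}\in\N^m$ one has
\[
(\SSigma_i\SSigma_j V)_{\mathbf{a}} \;=\; \bigl((V_{\mathbf{a}+\mathbf{e}_i+\mathbf{e}_j})_{U_j(\mathbf{a}+\mathbf{e}_i)}\bigr)_{U_i(\mathbf{a})},
\]
and analogously, with $i$ and $j$ swapped, for $(\SSigma_j\SSigma_i V)_{\mathbf{a}}$, so both are iterated coinvariants of the same $\Bbbk$-module $W:=V_{\mathbf{a}+\mathbf{e}_i+\mathbf{e}_j}$.

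The key observation is that when $i\neq j$, an element $\sigma_i(c)$ has identity components on every coordinate except the $i$-th, while $\sigma_j(c')$ has identities on every coordinate except the $j$-th, so they commute componentwise and hence in $\VI^m(\mathbf{a}+\mathbf{e}_i+\mathbf{e}_j,\mathbf{a}+\mathbf{e}_i+\mathbf{e}_j)$. Using Lemma \ref{lem:natural shifts commute} (equivalently $\iota_i\iota_j=\iota_j\iota_i$), the outer $U_i(\mathbf{a})$-action on the inner coinvariants is induced by the elements $\iota_j(\sigma_i(c))\in U_i(\mathbf{a}+\mathbf{e}_j)$, and similarly for the other order. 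Since $U_i(\mathbf{a}+\mathbf{e}_j)$ and $U_j(\mathbf{a}+\mathbf{e}_i)$ centralize each other inside that hom monoid, the standard fact that iterated coinvariants by commuting subgroups $H,K$ are canonically isomorphic to coinvariants by the product $HK$ yields a canonical $\Bbbk$-linear isomorphism
\[
\theta_{\mathbf{a}}\colon (\SSigma_i\SSigma_j V)_{\mathbf{a}} \xrightarrow{\ \sim\ } (\SSigma_j\SSigma_i V)_{\mathbf{a}}.
\]

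Finally I would check that $\{\theta_{\mathbf{a}}\}$ is natural in morphisms of $\VI^m$. For $f\in\VI^m(\mathbf{a},\mathbf{b})$, each side acts on $W$ via the descent of $\iota_i(\iota_j(f))_* = \iota_j(\iota_i(f))_*$; since these maps are literally equal, their descents to the common product-group quotient agree, upgrading $\theta$ to an isomorphism of $\VI^m$-modules. The main obstacle is really just careful bookkeeping, namely verifying the commutation of $\sigma_i(c)$ and $\sigma_j(c')$ in the appropriate hom-set and tracking which unipotent subgroup acts on which module at each stage; once that is written out, the argument is formal.
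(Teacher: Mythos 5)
Your proof is correct and follows essentially the same route as the paper: after unwinding the definitions via $\iota_j(U_i(\mathbf{a}))=U_i(\mathbf{a}+\mathbf{e}_j)$, both sides at $\mathbf{a}$ are identified with the quotient of $V_{\mathbf{a}+\mathbf{e}_i+\mathbf{e}_j}$ by the combined coinvariants for the two commuting unipotent subgroups $U_i(\mathbf{a}+\mathbf{e}_j)$ and $U_j(\mathbf{a}+\mathbf{e}_i)$. The explicit naturality check you add is a harmless extra detail that the paper leaves implicit.
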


\begin{proof}
   This is trivial if $i=j$ so assume that $i\neq j$. 
   
   Let $\mathbf{a}\in \N^m$. For any $\VI^m$-module $V$, we have
   \begin{align*}
      ( \SSigma_i \SSigma_j V )_{\mathbf{a}} 
      &= ((\SSigma_j V)_{\mathbf{a}+\mathbf{e}_i})_{U_i(\mathbf{a})} \\
      &= ((V_{\mathbf{a}+\mathbf{e}_i+\mathbf{e}_j})_{U_j(\mathbf{a}+\mathbf{e}_i)})_{U_i(\mathbf{a}+\mathbf{e}_j)},
   \end{align*}
   where we used the fact that $\iota_j(U_i(\mathbf{a})) = U_i(\mathbf{a}+\mathbf{e}_j)$. Thus 
   \[ ( \SSigma_i \SSigma_j V )_{\mathbf{a}} \cong V_{\mathbf{a}+\mathbf{e}_i+\mathbf{e}_j}/ (A+B) \]
   where:
   \begin{itemize}
      \item $A$ is the $\Bbbk$-submodule of $V_{\mathbf{a}+\mathbf{e}_i+\mathbf{e}_j}$ generated by all elements of the form $g_*(v)-v$ for some $g\in U_i(\mathbf{a}+\mathbf{e_j})$ and $v\in V_{\mathbf{a}+\mathbf{e}_i+\mathbf{e}_j}$;
      \item $B$ is the $\Bbbk$-submodule of $V_{\mathbf{a}+\mathbf{e}_i+\mathbf{e}_j}$ generated by all elements of the form $g_*(v)-v$ for some $g\in U_j(\mathbf{a}+\mathbf{e_i})$ and $v\in V_{\mathbf{a}+\mathbf{e}_i+\mathbf{e}_j}$.
   \end{itemize}   
   Similarly we also have 
   \[ (\SSigma_j \SSigma_i V )_{\mathbf{a}} \cong V_{\mathbf{a}+\mathbf{e}_i+\mathbf{e}_j}/(A+B). \]
\end{proof}

\begin{remark} \label{rem: exactness sigma}
   The functor $\Sigma_i$ is exact. Since $|U_i(\mathbf{a})|$ is invertible in $\Bbbk$ for all $\mathbf{a}\in \N^m$, the functor $\SSigma_i$ is exact. 
\end{remark}

\begin{lemma} \label{lem: shift preserves d and r}
   Let $d, r\in \ZZ$ and let $V$ be a $\VI^m$-module which is generated in degree $\leqslant d$ and related in degree $\leqslant r$. Then $\Sigma_i V$ and $\SSigma_i V$ are generated in degree $\leqslant d$ and related in degree $\leqslant r$. 
\end{lemma}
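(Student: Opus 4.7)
The plan is to reduce everything to the behavior of the shift functors on principal projective $\VI^m$-modules, which has already been computed in Propositions \ref{prop:shift_of_free} and \ref{prop:barshift2}. The exactness of $\Sigma_i$ and $\SSigma_i$ recorded in Remark \ref{rem: exactness sigma} is what makes this reduction work.

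The first key observation is the following: if $F$ is a free $\VI^m$-module generated in degree $\leqslant e$, then both $\Sigma_i F$ and $\SSigma_i F$ are free $\VI^m$-modules generated in degree $\leqslant e$. Indeed, write $F \cong \bigoplus_{j\in J} M^{\VI^m}(\mathbf{n}_j)$ with $|\mathbf{n}_j| \leqslant e$. Since $\Sigma_i$ and $\SSigma_i$ commute with direct sums (being left adjoints, or by direct inspection), Propositions \ref{prop:shift_of_free} and \ref{prop:barshift2} say that each summand $\Sigma_i M^{\VI^m}(\mathbf{n}_j)$ (resp.\ $\SSigma_i M^{\VI^m}(\mathbf{n}_j)$) decomposes as a direct sum of copies of $M^{\VI^m}(\mathbf{n}_j)$ and, when $(\mathbf{n}_j)_i \geqslant 1$, copies of $M^{\VI^m}(\mathbf{n}_j - \mathbf{e}_i)$. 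Both generators have total degree at most $|\mathbf{n}_j| \leqslant e$, so the claim follows.

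Now take a short exact sequence
\[ 0 \to U \to P \to V \to 0 \]
exhibiting that $V$ is generated in degree $\leqslant d$ and related in degree $\leqslant r$, so $P$ is free generated in degree $\leqslant d$ and $U$ is generated in degree $\leqslant r$. Choose also an epimorphism $Q \twoheadrightarrow U$ with $Q$ free generated in degree $\leqslant r$. Applying the exact functor $\Sigma_i$ (resp.\ $\SSigma_i$) yields an exact sequence
\[ 0 \to \Sigma_i U \to \Sigma_i P \to \Sigma_i V \to 0 \]
(resp.\ with $\SSigma_i$) together with an epimorphism $\Sigma_i Q \twoheadrightarrow \Sigma_i U$ (resp.\ $\SSigma_i Q \twoheadrightarrow \SSigma_i U$). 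By the previous paragraph, $\Sigma_i P$ is free generated in degree $\leqslant d$ and $\Sigma_i Q$ is free generated in degree $\leqslant r$ (and similarly for $\SSigma_i$). Hence $\Sigma_i V$ is generated in degree $\leqslant d$ and related in degree $\leqslant r$, and the same holds for $\SSigma_i V$.

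I do not expect any real obstacle here: the lemma is essentially a formal consequence of Propositions \ref{prop:shift_of_free} and \ref{prop:barshift2} combined with Remark \ref{rem: exactness sigma}. The only minor points requiring care are (a) the boundary values $d=-1$ or $r=-1$ in $\ZZ$, which correspond respectively to $V=0$ and $U=0$ and are handled trivially, and (b) the routine check that $\Sigma_i$ and $\SSigma_i$ commute with arbitrary direct sums, which is clear from their pointwise definition since coinvariants by a fixed finite group commute with colimits.
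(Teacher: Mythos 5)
Your proposal is correct and follows essentially the same route as the paper: take a presentation by free modules generated in degrees $\leqslant d$ and $\leqslant r$, apply the exact functors $\Sigma_i$ and $\SSigma_i$, and invoke Propositions \ref{prop:shift_of_free} and \ref{prop:barshift2} to see that the shifted free modules are again free and generated in the same degrees. The only difference is cosmetic packaging (a short exact sequence plus a surjection $Q\twoheadrightarrow U$ versus the single exact sequence $Q\to P\to V\to 0$ used in the paper).
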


\begin{proof}
   Let 
   \[ Q \to P \to V \to 0 \]
   be an exact sequence where: 
   \begin{itemize}
      \item $P$ is a direct sum of $\VI^m$-modules of the form $M^{\VI^m}(\mathbf{n})$ with $|\mathbf{n}|\leqslant d$, 
      \item $Q$ is a direct sum of $\VI^m$-modules of the form $M^{\VI^m}(\mathbf{n})$ with $|\mathbf{n}|\leqslant r$. 
   \end{itemize}
   Since the functor $\Sigma_i$ is exact, we have an exact sequence 
   \[ \Sigma_i Q \to \Sigma_i P \to \Sigma_i V \to 0. \]
   By Proposition \ref{prop:shift_of_free}, we know that:
   \begin{itemize}
      \item $\Sigma_i P$ is a direct sum of $\VI^m$-modules of the form $M^{\VI^m}(\mathbf{n})$ with $|\mathbf{n}|\leqslant d$, 
      \item $\Sigma_i Q$ is a direct sum of $\VI^m$-modules of the form $M^{\VI^m}(\mathbf{n})$ with $|\mathbf{n}|\leqslant r$. 
   \end{itemize}
   Therefore $\Sigma_i V$ is generated in degree $\leqslant d$ and related in degree $\leqslant r$. Similarly for $\SSigma_i V$. 
\end{proof}

\section{The functors \(K_i\) and \(D_i\)} \label{sec: the functors Ki and Di}

\subsection{The functor $K_i$}
Let $i\in \{1, 2, \ldots, m\}$. 

For each $\mathbf{a}=(a_1, \ldots, a_m)\in \N^m$, define $\varpi_i\in \VI^m(\mathbf{a},\mathbf{a}+\mathbf{e}_i)$ as follows: 
\begin{itemize}
   \item the $j$-th component of $\varpi_i$ is $I_{a_j}$ for all $j\neq i$,
   \item the $i$-th component of $\varpi_i$ is the $(a_i+1)\times a_i$-matrix whose first row is $0$, and whose last $a_i$ rows form the matrix $I_{a_i}$.
\end{itemize}  

Let $V$ be a $\VI^m$-module. Then there is a natural homomorphism 
\[ \varepsilon_i: V\to \Sigma_i V \]
defined at each $\mathbf{a}\in \Ob(\VI^m)$ to be the map $(\varpi_i)_*: V_{\mathbf{a}} \to V_{\mathbf{a}+\mathbf{e}_i}$. Let 
\[ \xi_i: \Sigma_i V \to \SSigma_i V \] 
be the quotient map and let 
\[ \overline\varepsilon_i: V\to \SSigma_i V \]
be the composition of $\varepsilon_i$ and $\xi_i$.

Let $K_i V$ be the kernel of $\varepsilon_i: V\to \Sigma_i V$. 

\begin{lemma} \label{lem: kernel Ki}
   Let $V$ be any $\VI^m$-module. Then $K_i V$ is the kernel of $\overline\varepsilon_i: V\to \SSigma_i V$.
\end{lemma}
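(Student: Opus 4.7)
The plan is to prove the two inclusions $\Ker\varepsilon_i \subseteq \Ker\overline\varepsilon_i$ and $\Ker\overline\varepsilon_i \subseteq \Ker\varepsilon_i$. The first is immediate since $\overline\varepsilon_i = \xi_i \circ \varepsilon_i$ by definition. For the reverse inclusion I would fix $\mathbf{a}\in\N^m$ and $v\in V_{\mathbf{a}}$ with $\overline\varepsilon_i(v)=0$, and aim to show that $\varepsilon_i(v)=(\varpi_i)_*(v)$ is zero in $V_{\mathbf{a}+\mathbf{e}_i}$.

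The crucial observation is the matrix identity $\sigma_i(c)\circ \varpi_i = \varpi_i$ for every $c\in \F^{a_i}$. This holds because the $i$-th component of $\varpi_i$ has zero first row, so multiplying it on the left by the $i$-th component of $\sigma_i(c)$---which differs from the identity only in its first column---leaves it unchanged; the other components of $\sigma_i(c)$ and $\varpi_i$ are identities and trivially commute. Applying $V$, we obtain $(\sigma_i(c))_*(\varepsilon_i(v)) = \varepsilon_i(v)$, so $\varepsilon_i(v)$ lies in the submodule $V_{\mathbf{a}+\mathbf{e}_i}^{U_i(\mathbf{a})}$ of $U_i(\mathbf{a})$-invariants, regardless of the choice of $v$.

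To finish, I would invoke the standard fact that when the order of a finite group $G$ acting on a $\Bbbk$-module $M$ is invertible in $\Bbbk$, the natural composition $M^G \hookrightarrow M \twoheadrightarrow M_G$ is an isomorphism: one direction is inclusion, the other is the averaging idempotent $|G|^{-1}\sum_{g\in G}g$. In our setting $|U_i(\mathbf{a})| = q^{a_i}$ is invertible in $\Bbbk$ by the blanket hypothesis on $\Bbbk$. Since the hypothesis $\overline\varepsilon_i(v)=0$ says precisely that the invariant element $\varepsilon_i(v)$ maps to zero in the coinvariants $(V_{\mathbf{a}+\mathbf{e}_i})_{U_i(\mathbf{a})}$, the injectivity of $V_{\mathbf{a}+\mathbf{e}_i}^{U_i(\mathbf{a})}\to (V_{\mathbf{a}+\mathbf{e}_i})_{U_i(\mathbf{a})}$ forces $\varepsilon_i(v)=0$, completing the proof. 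There is no genuine obstacle; the only point worth emphasizing is that the entire argument rests on the invertibility of $q$ in $\Bbbk$, which is what makes the passage from coinvariants back to invariants work cleanly.
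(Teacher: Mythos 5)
Your proposal is correct and follows essentially the same route as the paper: both rest on the identity $\sigma_i(c)\varpi_i=\varpi_i$ for all $c\in\F^{a_i}$ together with the averaging operator $|U_i(\mathbf{a})|^{-1}\sum_{g\in U_i(\mathbf{a})}g_*$, which is available because $q$ is invertible in $\Bbbk$. The only cosmetic difference is that you package the averaging as the standard isomorphism between invariants and coinvariants, while the paper uses it directly as a section of the quotient map $V_{\mathbf{a}+\mathbf{e}_i}\to (V_{\mathbf{a}+\mathbf{e}_i})_{U_i(\mathbf{a})}$.
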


\begin{proof}
   Let $\mathbf{a}\in \N^m$ and $v\in V_{\mathbf{a}}$. We need to show that if $\xi_i((\varpi_i)_*(v))=0$, then $(\varpi_i)_*(v)=0$. 

   Since $|U_i(\mathbf{a})|$ is invertible in $\Bbbk$, we have a well-defined map 
   \[ \psi: (V_{\mathbf{a}+\mathbf{e}_i})_{U_i(\mathbf{a})} \to V_{\mathbf{a}+\mathbf{e}_i} \]
   which sends the class represented by $w\in V_{\mathbf{a}+\mathbf{e}_i}$ to the element 
   \[  |U_i(\mathbf{a})|^{-1}\sum_{g\in U_i(\mathbf{a})} g_*(w).  \]
   For every $g\in U_i(\mathbf{a})$, we have $g \varpi_i = \varpi_i$. Thus 
   \begin{align*}
      \psi(\xi_i((\varpi_i)_*(v))) &= |U_i(\mathbf{a})|^{-1}\sum_{g\in U_i(\mathbf{a})} g_*((\varpi_i)_*(v)) \\
      &= |U_i(\mathbf{a})|^{-1}\sum_{g\in U_i(\mathbf{a})} (\varpi_i)_*(v) \\ 
      &= (\varpi_i)_*(v).
   \end{align*}
   In particular, if $\xi_i((\varpi_i)_*(v))=0$, then $(\varpi_i)_*(v)=0$. 
\end{proof}

\subsection{The functor $D_i$}
Let $i\in \{1, 2, \ldots, m\}$. 

For any $\VI^m$-module $V$, let $D_i V$ be the cokernel of $\overline\varepsilon_i: V\to \overline\Sigma_i V$.
By Lemma \ref{lem: kernel Ki}, we have an exact sequence 
\[
   \xymatrix{
   0 \ar[r] & K_i V  \ar[r] & V \ar[r]^{\overline{\varepsilon}_i} & \SSigma_i V  \ar[r] & D_i V  \ar[r] & 0. } 
\]
This gives the following two short exact sequences:
\begin{gather}
   0 \longrightarrow K_i V \longrightarrow V \longrightarrow V/K_i V \longrightarrow 0,  \label{eq: ses 1} \\
   0 \longrightarrow V/K_i V \longrightarrow \SSigma_i V \longrightarrow D_i V \longrightarrow 0. \label{eq: ses 2}
\end{gather}
The short exact sequences \eqref{eq: ses 1} and \eqref{eq: ses 2} induce the long exact sequences
\begin{gather}
   \cdots \to \HH^{\VI^m}_{q+1} (V/K_i V) \to \HH^{\VI^m}_q(K_i V) \to \HH^{\VI^m}_q(V) \to \HH^{\VI^m}_q(V/K_i V) \to \cdots, \label{eq: les 1} \\
   \cdots \to \HH^{\VI^m}_{q+1} (D_i V) \to \HH^{\VI^m}_q(V/K_i V) \to \HH^{\VI^m}_q(\SSigma_i V) \to \HH^{\VI^m}_q(D_i V) \to \cdots. \label{eq: les 2}
\end{gather}

\begin{remark} \label{rem: exactness D}
   The functor $D_i$ is right exact.
\end{remark}

\begin{notation}
When $m=1$, we write 
\[ \varpi,\quad \varepsilon,\quad \overline{\varepsilon},\quad KV,\quad DV \] for 
\[ \varpi_1,\quad \varepsilon_1,\quad \overline{\varepsilon}_1,\quad K_1V,\quad D_1V, \]  
respectively. (The functor $D$ on $\VI\Mod$ is denoted by $\overline{\Delta}$ in \cite{na}.)  
\end{notation}

For any $\VI$-modules $V^{(1)}, \ldots, V^{(m)}$, we have:
\begin{equation}\label{eq: tensor_Di}
D_i\left( V^{(1)}\boxtimes \cdots\boxtimes V^{(m)} \right) 
= V^{(1)}\boxtimes \cdots \boxtimes \left( D V^{(i)} \right) \boxtimes \cdots\boxtimes V^{(m)}. 
\end{equation}

\begin{lemma} \label{lem: Di of free}
   Let $\mathbf{n}=(n_1, \ldots, n_m)\in \N^m$. 
   
   If $n_i=0$, then  
   \[ D_i M^{\VI^m}(\mathbf{n}) = 0. \]
   
   If $n_i\geqslant 1$, then 
   \[ D_i M^{\VI^m}(\mathbf{n}) \cong M^{\VI^m}(\mathbf{n}-\mathbf{e}_i)^{\oplus (q^{n_i}-1)}. \]
\end{lemma}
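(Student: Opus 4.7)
The plan is to reduce to the case $m=1$ using the external tensor product decomposition, and then to analyze the map $\overline{\varepsilon}: M^{\VI}(n)\to \SSigma M^{\VI}(n)$ via the explicit decomposition of $\SSigma M^{\VI}(n)$ built in the proof of Proposition \ref{prop:barshift1}. Concretely, the canonical isomorphism \eqref{eq: tensor} together with the external tensor formula \eqref{eq: tensor_Di} reduce the statement to showing that $DM^{\VI}(0) = 0$ and $DM^{\VI}(n)\cong M^{\VI}(n-1)^{\oplus(q^n-1)}$ for $n\geqslant 1$.

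For the reduced statement, I would unwind the definition of $\overline{\varepsilon}$. At degree $a$, the map $\varepsilon: M^{\VI}(n)_a \to M^{\VI}(n)_{a+1}$ sends a basis element $f\in \VI(n,a)$ to $\varpi f \in \VI(n,a+1)$, the matrix obtained by prepending a zero row to $f$. In particular $\beta_{\varpi f} = 0$, so $\varpi f$ lies in the $\VI$-submodule $P(0)$ appearing in the proof of Proposition \ref{prop:barshift1}, and the assignment $f\mapsto \varpi f$ gives a $\VI$-module isomorphism $M^{\VI}(n)\cong P(0)$. Recall that in that proof, $P = \bigoplus_{\beta} P(\beta)$ (sum over all row vectors $\beta: \F^n\to \F$) was shown to map isomorphically onto $\SSigma M^{\VI}(n)$ via the composition $P\hookrightarrow \Sigma M^{\VI}(n)\twoheadrightarrow \SSigma M^{\VI}(n)$, and each $P(\beta)$ with $\beta\neq 0$ was seen to be isomorphic to $M^{\VI}(n-1)$.

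Under this identification, the image of $\overline{\varepsilon}$ is exactly the summand $P(0)$, so $\overline{\varepsilon}$ is injective and $DM^{\VI}(n)\cong \bigoplus_{\beta\neq 0} P(\beta)$. If $n=0$ there are no such $\beta$, so $DM^{\VI}(0) = 0$; if $n\geqslant 1$ there are $q^n - 1$ nonzero row vectors $\beta$, yielding $DM^{\VI}(n) \cong M^{\VI}(n-1)^{\oplus (q^n-1)}$. The main (and only) obstacle is bookkeeping: matching the image of $\overline{\varepsilon}$ with the specific summand $P(0)$ inside the decomposition of $\SSigma M^{\VI}(n)$. Once this identification is in hand, the lemma follows at once from Proposition \ref{prop:barshift1}.
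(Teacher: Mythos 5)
Your proposal is correct and follows essentially the same route as the paper: reduce to $m=1$ via \eqref{eq: tensor} and \eqref{eq: tensor_Di}, then observe that $\varepsilon$ identifies $M^{\VI}(n)$ with the summand $P(0)$ of the decomposition from Proposition \ref{prop:barshift1}, so $DM^{\VI}(n)$ is the direct sum of the $P(\beta)$ with $\beta\neq 0$. Your treatment of the $n=0$ case (no nonzero $\beta$, equivalently $\overline{\varepsilon}$ bijective) also matches the paper.
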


\begin{proof}
   By \eqref{eq: tensor} and \eqref{eq: tensor_Di}, it suffices to prove the lemma when $m=1$. 

   It is easy to see that the natural homomorphism $\overline{\varepsilon}: M^{\VI}(0)\to \SSigma M^{\VI}(0)$ is bijective and so 
   \[ DM^{\VI}(0) = 0. \]
   
   Let $n\in \N$ with $n\geqslant 1$. Recall that in the proof of Proposition \ref{prop:barshift1}, we defined for each row vector $\beta: \F^n \to \F$ a $\VI$-submodule $P(\beta)$ of $\Sigma M^{\VI}(n)$. The sum of $P(\beta)$ over all $\beta$ is a direct sum denoted by $P$ and we have an isomorphism $\eta: P \to \SSigma M^{\VI}(n)$ given by the composition of the inclusion map $P\to \Sigma M^{\VI}(n)$ and the quotient map $\Sigma M^{\VI}(n) \to \SSigma M^{\VI}(n)$. It is easy to see that the natural homomorphism $\varepsilon$ maps $M^{\VI}(n)$ bijectively onto $P(0)$, hence $D M^{\VI}(n)$ is isomorphic to the direct sum of $P(\beta)$ over all nonzero $\beta$. It follows  that 
   \[ DM^{\VI}(n) \cong M^{\VI}(n-1)^{\oplus (q^{n}-1)}.  \]   
\end{proof}

\begin{lemma} \label{lem:degD2}
   Let $d,r\in\ZZ$ and let $V$ be a $\VI^m$-module generated in degree $\leqslant d$ and related in degree $\leqslant r$. If $d\geqslant 0$, then $D_i V$ is a $\VI^m$-module generated in degree $\leqslant d-1$ and related in degree $\leqslant r$. 
\end{lemma}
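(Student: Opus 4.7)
The plan is to apply the right-exact functor $D_i$ to a free presentation of $V$ and then read off the generating degree and relating degree of $D_i V$ directly from Lemma \ref{lem: Di of free}.

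Concretely, I would start with an exact sequence
\[ Q \xrightarrow{\phi} P \to V \to 0 \]
in which $P = \bigoplus_{j} M^{\VI^m}(\mathbf{n}_j)$ with $|\mathbf{n}_j|\leqslant d$ and $Q = \bigoplus_{k} M^{\VI^m}(\mathbf{m}_k)$ with $|\mathbf{m}_k|\leqslant r$; such a $Q$ exists because the kernel of $P\to V$ is generated in degree $\leqslant r$ by hypothesis. By Remark \ref{rem: exactness D}, applying $D_i$ yields an exact sequence
\[ D_i Q \xrightarrow{D_i\phi} D_i P \longrightarrow D_i V \longrightarrow 0. \]
Lemma \ref{lem: Di of free} tells us that each summand $D_i M^{\VI^m}(\mathbf{n}_j)$ is either zero or free of generating degree $|\mathbf{n}_j|-1 \leqslant d-1$ (this is where the hypothesis $d\geqslant 0$ is used, to keep $d-1\in\ZZ$), and similarly each $D_i M^{\VI^m}(\mathbf{m}_k)$ is either zero or free of generating degree $\leqslant r-1$. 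Hence $D_i P$ is a free $\VI^m$-module generated in degree $\leqslant d-1$, so $D_i V$ is generated in degree $\leqslant d-1$. The kernel of the surjection $D_i P \to D_i V$ equals the image of $D_i\phi$, hence is a quotient of the free module $D_i Q$, so it is generated in degree $\leqslant \max\{-1, r-1\} \leqslant r$. This gives the desired relation bound.

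The argument is short because Lemma \ref{lem: Di of free} and Remark \ref{rem: exactness D} have already absorbed the real work, so I do not anticipate any genuine obstacle. The one point requiring minor care is the edge case $r=-1$, where $Q=0$ and so $D_iQ=0$; in that case the kernel of $D_iP\to D_iV$ is automatically zero, still satisfying the required degree bound.
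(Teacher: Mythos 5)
Your proposal is correct and is essentially identical to the paper's own proof: both take a free presentation $Q\to P\to V\to 0$ with generators in degrees $\leqslant d$ and $\leqslant r$, apply the right exact functor $D_i$ (Remark \ref{rem: exactness D}), and read off the bounds from Lemma \ref{lem: Di of free}. Your extra remarks on the $d\geqslant 0$ hypothesis and the $r=-1$ edge case are fine and change nothing of substance.
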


\begin{proof}
   Let 
   \[ Q \to P \to V \to 0 \]
   be an exact sequence where: 
   \begin{itemize}
      \item $P$ is a direct sum of $\VI^m$-modules of the form $M^{\VI^m}(\mathbf{n})$ with $|\mathbf{n}|\leqslant d$, 
      \item $Q$ is a direct sum of $\VI^m$-modules of the form $M^{\VI^m}(\mathbf{n})$ with $|\mathbf{n}|\leqslant r$. 
   \end{itemize}
   Since the functor $D_i$ is right exact, we have an exact sequence 
   \[ D_i Q \to D_i P \to D_i V \to 0. \]
   By Lemma \ref{lem: Di of free}, we know that:
   \begin{itemize}
      \item $D_i P$ is a direct sum of $\VI^m$-modules of the form $M^{\VI^m}(\mathbf{n})$ with $|\mathbf{n}|\leqslant d-1$, 
      \item $D_i Q$ is a direct sum of $\VI^m$-modules of the form $M^{\VI^m}(\mathbf{n})$ with $|\mathbf{n}|\leqslant r$. 
   \end{itemize}
   Therefore $D_i V$ is generated in degree $\leqslant d-1$ and related in degree $\leqslant r$.
\end{proof}

\section{Shift theorem} \label{sec: shift theorem}

\subsection{The $m=1$ case}
Nagpal proved that if $V$ is a finitely generated $\VI$-module, then $\Sigma^s V$ and $\SSigma^s V$ are homology acyclic for all $s$ sufficiently large; see \cite{na}*{Proposition 3.10 and Theorem 4.38}. Our results in this section and the next depend on Nagpal's result. 

\begin{lemma} \label{lem: shift thm base case}
  Let $d, r\in \ZZ$. Let $V$ be a finitely generated $\VI$-module which is generated in degree $\leqslant d$ and related in degree $\leqslant r$. Then for any non-negative integer $s\geqslant d+r$, the $\VI$-modules $\Sigma^s V$ and $\SSigma^s V$ are homology acyclic. 
\end{lemma}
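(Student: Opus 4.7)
The plan is to proceed by induction on $d$, with $r \geqslant -1$ arbitrary. The base case $d = -1$ is trivial since $V = 0$. For the inductive step with $d \geqslant 0$, I start from the two short exact sequences \eqref{eq: ses 1} and \eqref{eq: ses 2} specialized to $m = 1$. By Lemma \ref{lem:degD2}, $DV$ is generated in degree $\leqslant d - 1$ and related in degree $\leqslant r$, so the induction hypothesis yields that $\SSigma^s DV$ is homology acyclic for every $s \geqslant (d-1) + r = d + r - 1$.

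The key auxiliary step is to prove $\deg KV \leqslant d + r - 1$, equivalently $\SSigma^s KV = 0$ for $s \geqslant d + r$. I would choose a presentation $0 \to U \to P \to V \to 0$ with $P$ a free $\VI$-module of generation degree $\leqslant d$ and $U$ generated in degree $\leqslant r$, and apply the snake lemma to the commutative diagram formed by this presentation together with its image under $\overline{\varepsilon}$. Exactness of $\SSigma$ (Remark \ref{rem: exactness sigma}) and the injectivity of $\varepsilon$ on principal projectives ensure $KP = 0$; combining with Lemma \ref{lem: kernel Ki} then produces the four-term exact sequence
\[
   0 \to KV \to DU \to DP \to DV \to 0,
\]
where $DP$ is a direct sum of principal projectives of degrees $\leqslant d - 1$ by Lemma \ref{lem: Di of free}. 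The bound $\deg KV \leqslant d + r - 1$ should then be extracted from this sequence using the generation degree of $U$.

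Granted the bound on $\deg KV$, I apply $\SSigma^s$ (exact) to \eqref{eq: ses 1} and \eqref{eq: ses 2}. For $s \geqslant d + r$, $\SSigma^s KV = 0$ gives $\SSigma^s V \cong \SSigma^s (V/KV)$; and the long exact sequence in $\HH^{\VI}$ induced by $\SSigma^s$ applied to \eqref{eq: ses 2}, together with the homology acyclicity of $\SSigma^s DV$, produces the isomorphism $\HH_q^{\VI}(\SSigma^s (V/KV)) \cong \HH_q^{\VI}(\SSigma^{s+1} V)$ for all $q \geqslant 1$. Combining, $\HH_q^{\VI}(\SSigma^s V) \cong \HH_q^{\VI}(\SSigma^{s+1} V)$ for $q \geqslant 1$ and $s \geqslant d + r$. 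Iterating this isomorphism and invoking Nagpal's qualitative shift theorem \cite{na}*{Theorem 4.38} (which gives $\HH_q^{\VI}(\SSigma^{s'} V) = 0$ for $s' \gg 0$), I conclude $\HH_q^{\VI}(\SSigma^s V) = 0$ for $q \geqslant 1$ and $s \geqslant d + r$, i.e., $\SSigma^s V$ is homology acyclic. The argument for $\Sigma^s V$ is parallel, using Proposition \ref{prop:shift_of_free} (so that $\Sigma$ of a free module is free) and \cite{na}*{Proposition 3.10} in place of their $\SSigma$-counterparts.

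The main obstacle I anticipate is proving the explicit bound $\deg KV \leqslant d + r - 1$: the snake lemma only embeds $KV$ into $DU$, and $DU$ can itself have infinite degree, so finer structure must be leveraged, specifically, that $KV$ is the kernel of $DU \to DP$ with $DP$ a free module of generation degree $\leqslant d - 1$, combined with the generation degree $\leqslant r$ of $U$.
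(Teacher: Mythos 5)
Your overall strategy for the modified shift functor is the same as the paper's: induct on $d$, use the four-term exact sequence $0 \to KV \to V \to \SSigma V \to DV \to 0$, kill $\SSigma^s KV$ via a degree bound on $KV$, apply the inductive hypothesis to $DV$ (Lemma \ref{lem:degD2}), deduce $\HH_i^{\VI}(\SSigma^s V) \cong \HH_i^{\VI}(\SSigma^{s+1} V)$ for $i \geqslant 1$, and finish with Nagpal's qualitative theorem. But the step you yourself flag as the obstacle is a genuine gap, and it is exactly the point where the paper does something you cannot reproduce by a diagram chase: the bound $\deg KV \leqslant \max\{-1, d+r-1\}$ is imported from \cite{gl_vi}*{Lemma 2.4 and Theorem 3.2}, i.e.\ it rests on the Gan--Li regularity theorem for $\VI$-modules (itself built on Nagpal's shift theorem), not on formal homological algebra. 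Your snake-lemma sequence $0 \to KV \to DU \to DP \to DV \to 0$ is correct, but it only exhibits $KV$ as the kernel of a map from a module generated in degree $\leqslant \max\{-1,r-1\}$ to a free module generated in degree $\leqslant d-1$, and generation degrees of source and target do not bound the degree of a kernel; controlling such kernels is precisely the kind of statement the regularity theorem encodes, so no ``finer structure'' extracted from that sequence alone will close the gap.

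A secondary problem is your claim that the argument for $\Sigma^s V$ is ``parallel.'' The induction hinges on the cokernel of the comparison map dropping generation degree by one, and this fails for the natural shift: by Proposition \ref{prop:shift_of_free}, $\Sigma M^{\VI}(n)$ contains $q^n$ copies of $M^{\VI}(n)$, so $\operatorname{coker}(\varepsilon\colon V \to \Sigma V)$ is in general only generated in degree $\leqslant d$, not $\leqslant d-1$ (this is the very reason Nagpal introduced $\SSigma$). The paper does not run the induction for $\Sigma$ at all; it disposes of that half directly by citing \cite{gl_vi}*{Corollary 4.4} together with \cite{na}*{Proposition 3.10}. So both halves of your proposal ultimately need the quantitative results of \cite{gl_vi} as input rather than the elementary substitutes you sketch.
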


\begin{proof}
   For the natural shift functor, the lemma follows from \cite{gl_vi}*{Corollary 4.4} and \cite{na}*{Proposition 3.10}.

   To prove the lemma for the modified shift functor, we use induction on $d$. The base case $d=-1$ is trivial. Now let $d\geqslant 0$ and assume the following induction hypothesis: 
   \begin{equation*}
      \parbox{.85\textwidth}{\it For all finitely generated $\VI$-module $W$ which is generated in degree $\leqslant d-1$ and related in degree $\leqslant r$, if $s \geqslant d+r-1$, then $\SSigma^s W$ is homology acyclic.}
   \end{equation*}
   
   Let $s\geqslant d+r$. Recall that we have the exact sequence 
   \[ 0 \to KV \to V \to \SSigma V \to DV \to 0. \]
   Applying the functor $\SSigma^s$ to the above exact sequence, we obtain an exact sequence 
   \[ 0 \to \SSigma^s KV \to \SSigma^s V \to \SSigma^{s+1} V \to \SSigma^s DV \to 0. \]
   By \cite{gl_vi}*{Lemma 2.4 and Theorem 3.2}, we have $\deg KV \leqslant \max\{-1, d+r-1\}$ so $\SSigma^s KV=0$.  Therefore we have a short exact sequence 
   \[ 0 \to \SSigma^s V \to \SSigma^{s+1} V \to \SSigma^s DV \to 0. \]
   By Lemma \ref{lem:degD2} and our induction hypothesis, we have $\HH^{\VI}_i(\SSigma^s DV)=0$ for all $i\geqslant 1$. Therefore 
   \[ \HH^{\VI}_i(\SSigma^s V) \cong \HH^{\VI}_i(\SSigma^{s+1} V) \qquad \mbox{ for all }i\geqslant 1. \]
   Since $s$ is an arbitrary non-negative integer $\geqslant d+r$, it follows that 
   \[ \HH^{\VI}_i(\SSigma^s V) \cong \HH^{\VI}_i(\SSigma^n V) \qquad \mbox{ for all }n\geqslant s\mbox{ and }i\geqslant 1.\]  
   By \cite{na}*{Proposition 3.10 and Theorem 4.38}, there exists an integer $n\geqslant s$ such that $\HH^{\VI}_i(\SSigma^n V) = 0$ for all $i\geqslant 1$. We conclude that $\HH^{\VI}_i(\SSigma^s V)=0$ for all $i\geqslant 1$.
\end{proof}

\subsection{The $m\geqslant 2$ case}

\begin{lemma} \label{lem: hor and shift commute}
   Assume $m\geqslant 2$ and consider $\VI^m$ as the product category $\VI \times \VI^{m-1}$. Let $i\in\{2, \ldots, m\}$. Then $\HH_0^{\hor} \Sigma_i \cong \Sigma_i \HH_0^{\hor}$ and $\HH_0^{\hor} \SSigma_i \cong \SSigma_i \HH_0^{\hor}$.  
\end{lemma}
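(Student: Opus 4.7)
The plan is to prove the stronger statements $(\Sigma_i V)^{\hor} = \Sigma_i V^{\hor}$ and $(\SSigma_i V)^{\hor} = \SSigma_i V^{\hor}$ as $\VI^m$-submodules of $\Sigma_i V$ and $\SSigma_i V$ respectively, after which the two desired isomorphisms follow by quotienting and using the exactness of $\Sigma_i$ and $\SSigma_i$ (Remark \ref{rem: exactness sigma}). The underlying idea is that $\HH_0^{\hor}$ only involves morphisms in the first $\VI$-factor while $\Sigma_i$ and $\SSigma_i$ for $i \geqslant 2$ modify only the $i$-th factor, so the two operations should trivially commute at the level of generators.

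For $\Sigma_i$, fix $\mathbf{n} \in \N^m$. By definition of $\Sigma_i$ as pullback along $\iota_i$, the action on $\Sigma_i V$ of a morphism $(f, I_{n_2}, \ldots, I_{n_m})$ with $a_1 < n_1$ and $f \in \VI(a_1, n_1)$ is the action on $V$ of
\[ \iota_i(f, I_{n_2}, \ldots, I_{n_m}) = (f, I_{n_2}, \ldots, I_{n_i+1}, \ldots, I_{n_m}), \]
whose only non-identity component sits in the first coordinate. Comparing with Notation \ref{horizontal submodule} applied at the multi-index $\mathbf{n}+\mathbf{e}_i$, this is precisely a generator of $V^{\hor}_{\mathbf{n}+\mathbf{e}_i} = (\Sigma_i V^{\hor})_{\mathbf{n}}$. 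Hence the two submodules $(\Sigma_i V)^{\hor}_{\mathbf{n}}$ and $(\Sigma_i V^{\hor})_{\mathbf{n}}$ have the same generating set inside $V_{\mathbf{n}+\mathbf{e}_i}$, and the first isomorphism follows.

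For $\SSigma_i$ the same identification shows that the generators of $(\SSigma_i V)^{\hor}_{\mathbf{n}}$ inside $(V_{\mathbf{n}+\mathbf{e}_i})_{U_i(\mathbf{n})}$ are the classes, modulo $U_i(\mathbf{n})$, of the generators of $V^{\hor}_{\mathbf{n}+\mathbf{e}_i}$. Since $V^{\hor}$ is a $\VI^m$-submodule of $V$, the group $U_i(\mathbf{n})$ preserves $V^{\hor}_{\mathbf{n}+\mathbf{e}_i}$; and since $\SSigma_i$ is exact, $\SSigma_i V^{\hor}$ embeds in $\SSigma_i V$, realizing $(\SSigma_i V^{\hor})_{\mathbf{n}}$ as that image. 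The second isomorphism then follows on passing to quotients. The proof is essentially a bookkeeping exercise, with the only subtle point being the use of exactness of $\SSigma_i$ (via invertibility of $|U_i(\mathbf{n})|$ in $\Bbbk$) to identify the coinvariants of a submodule with a submodule of the coinvariants.
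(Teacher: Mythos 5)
Your proof is correct and takes essentially the same route as the paper: both arguments boil down to the observation that, for $i\geqslant 2$, the horizontal generators of the shifted module at $\mathbf{n}$ are exactly (the classes of) the horizontal generators of $V$ at $\mathbf{n}+\mathbf{e}_i$, since $\iota_i$ leaves the first coordinate untouched. The paper packages this by computing that both $(\HH_0^\hor(\SSigma_i V))_{\mathbf{n}}$ and $(\SSigma_i \HH_0^\hor(V))_{\mathbf{n}}$ equal $V_{\mathbf{n}+\mathbf{e}_i}/(V^\hor_{\mathbf{n}+\mathbf{e}_i}+C)$, with $C$ spanned by the elements $g_*(v)-v$ for $g\in U_i(\mathbf{n})$, whereas you phrase the same computation as an equality of horizontal submodules followed by passage to quotients, using exactness of $\SSigma_i$ where the paper's double-quotient formulation does not need it.
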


\begin{proof}
   We prove the lemma for the modified shift functors. The proof for the natural shift functors is similar.

   Let $\mathbf{n}=(n_1, \ldots, n_m)\in \N^m$. Let $V$ be any $\VI^m$-module. Then by definition,
   \[ (\HH_0^\hor (\SSigma_i V))_{\mathbf{n}} = (\SSigma_i V)_{\mathbf{n}}/ (\SSigma_i V)^\hor_{\mathbf{n}} . \]
   It follows that 
   \[ (\HH_0^\hor (\SSigma_i V))_{\mathbf{n}} \cong V_{\mathbf{n}+\mathbf{e}_i}/( V^\hor_{\mathbf{n}+\mathbf{e}_i}+C) \]
   where $C$ is the $\Bbbk$-submodule of $V_{\mathbf{n}+\mathbf{e}_i}$ generated by all elements of the form $g_*(v)-v$ for some $g\in U_i(\mathbf{n})$ and $v\in V_{\mathbf{n}+\mathbf{e}_i}$.
   
   On the other hand, we also have
   \begin{align*}
      (\SSigma_i \HH_0^\hor(V))_{\mathbf{n}} 
      &= ((\HH_0^\hor(V))_{\mathbf{n}+\mathbf{e}_i})_{U_i(\mathbf{n})}  \\
      &= ( V_{\mathbf{n}+\mathbf{e}_i} / V^\hor_{\mathbf{n}+\mathbf{e}_i} )_{U_i(\mathbf{n})}  \\
      &\cong V_{\mathbf{n}+\mathbf{e}_i}/( V^\hor_{\mathbf{n}+\mathbf{e}_i} + C).
   \end{align*}   
\end{proof}

\begin{theorem}[Shift theorem] \label{thm: shift}
      Let $m,d,r\in\mathbb{Z}$ with $m\geqslant 1$ and $d, r\geqslant -1$. Let $V$ be a finitely generated $\VI^m$-module which is generated in degree $\leqslant d$ and related in degree $\leqslant r$. Then for any non-negative integer $s\geqslant d+r$, the $\VI^m$-modules $\Sigma_1^s \Sigma_2^s \cdots \Sigma_m^s V$ and $\SSigma_1^s \SSigma_2^s \cdots \SSigma_m^s V$ are homology acyclic. 
\end{theorem}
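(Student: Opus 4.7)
The proof is by induction on $m$; the base case $m=1$ is Lemma \ref{lem: shift thm base case}. I argue the statement for $W := \Sigma_1^s \Sigma_2^s \cdots \Sigma_m^s V$, the argument for the modified shifts being entirely analogous, using the $\SSigma$ halves of Lemmas \ref{lem: shift thm base case} and \ref{lem: hor and shift commute} together with Lemma \ref{lem:modified shifts commute}. So assume $m \geqslant 2$ and view $\VI^m = \VI \times \VI^{m-1}$.

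The first step is to verify the hypothesis of Lemma \ref{lem: spec seq degenerate} for $W$. Using Lemma \ref{lem:natural shifts commute}, write $W = \Sigma_1^s V''$ with $V'' := \Sigma_2^s \cdots \Sigma_m^s V$. By repeated application of Lemma \ref{lem: shift preserves d and r}, both $V''$ and $W$ are finitely generated, generated in degree $\leqslant d$, and related in degree $\leqslant r$. Restricting a presentation of $V''$ to the coordinate $(-, n_2, \ldots, n_m)$ and noting that $M^{\VI^m}(\mathbf{n})_{(-, n_2, \ldots, n_m)}$ is a finite direct sum of copies of $M^{\VI}(n_1)$, one concludes that $V''_{(-, n_2, \ldots, n_m)}$ is a finitely generated $\VI$-module that is generated in degree $\leqslant d$ and related in degree $\leqslant r$. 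Since $W_{(-, n_2, \ldots, n_m)} = \Sigma^s V''_{(-, n_2, \ldots, n_m)}$, the base case $m=1$ gives that $W_{(-, n_2, \ldots, n_m)}$ is homology acyclic.

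Lemma \ref{lem: spec seq degenerate} now yields $\HH^{\VI^m}_p(W) \cong \HH^\ver_p(\HH^\hor_0(W))$ for all $p \geqslant 0$, so by \eqref{eq:hor and ver homology} it suffices to show that the $\VI^{m-1}$-module $\HH^\hor_0(W)_{(n_1, -)}$ is homology acyclic for every $n_1 \in \N$. Iterating Lemma \ref{lem: hor and shift commute} gives
\[
\HH^\hor_0(W)_{(n_1, -)} \;\cong\; \Sigma_2^s \cdots \Sigma_m^s\bigl(\HH^\hor_0(\Sigma_1^s V)_{(n_1, -)}\bigr),
\]
where the shifts on the right are now interpreted as the $m-1$ shift functors on $\VI^{m-1}$-modules. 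Next, applying the right-exact functor $\HH^\hor_0$ to a presentation of $\Sigma_1^s V$ (which is generated in degree $\leqslant d$ and related in degree $\leqslant r$ by Lemma \ref{lem: shift preserves d and r}), restricting to $(n_1, -)$, and invoking Lemma \ref{lem: hor prin proj res}, one sees that $\HH^\hor_0(\Sigma_1^s V)_{(n_1, -)}$ is a finitely generated $\VI^{m-1}$-module generated in degree $\leqslant d$ and related in degree $\leqslant r$. The induction hypothesis applied to this $\VI^{m-1}$-module with the same exponent $s \geqslant d+r$ then completes the proof.

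The main technical point is purely bookkeeping: verifying that at each stage (restriction to $(-, n_2, \ldots, n_m)$, restriction to $(n_1, -)$, and application of $\HH^\hor_0$) both finite generation and the degree bounds $d$ and $r$ are preserved, so that a single exponent $s \geqslant d+r$ suffices at every stage of the induction. The conceptual content is that a horizontal shift by $s$ degenerates the first spectral sequence of \eqref{eq:hor and ver spectral sequences} (via Lemma \ref{lem: spec seq degenerate}), reducing the vanishing of $\HH^{\VI^m}_p(W)$ to the vanishing of $\VI^{m-1}$-homology of a module to which the inductive hypothesis applies.
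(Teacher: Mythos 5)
Your proposal is correct and follows essentially the same route as the paper's proof: induction on $m$, degeneration of the spectral sequence via Lemma \ref{lem: spec seq degenerate} after checking acyclicity of the $\VI$-restrictions using the $m=1$ case, commuting $\HH^\hor_0$ past the shifts in coordinates $2,\ldots,m$ via Lemma \ref{lem: hor and shift commute}, and applying the inductive hypothesis to $\HH^\hor_0(\Sigma_1^s V)_{(n_1,-)}$ with the degree bounds supplied by Lemma \ref{lem: hor prin proj res}. The only (immaterial) difference is that you write out the natural-shift case and delegate the modified shifts to symmetry, whereas the paper does the reverse.
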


\begin{proof}
   We prove the theorem for the modified shift functors. The proof for the natural shift functors is similar.

   We use induction on $m$, the base case $m=1$ being Lemma \ref{lem: shift thm base case}. Now let $m\geqslant 2$ and assume that the theorem holds for finitely generated $\VI^{m-1}$-modules. We shall consider $\VI^m$ as the product category $\VI \times \VI^{m-1}$.
   
   Let $s\geqslant d+r$. For all $(n_2, \ldots, n_m)\in \N^{m-1}$, we have
   \begin{equation} \label{eq:shifts and restrict}
      (\SSigma_1^s \SSigma_2^s \cdots \SSigma_m^s V)_{(-, n_2, \ldots, n_m)} = \SSigma^s ((\SSigma_2^s \cdots \SSigma_m^s V)_{(-, n_2, \ldots, n_m)}). 
   \end{equation}
      By Proposition \ref{prop:barshift2}, the $\VI^m$-module $\SSigma_2^s \cdots \SSigma_m^s V$ is generated in degree $\leqslant d$ and related in degree $\leqslant r$. 
   Hence by \cite{gta}*{Lemma 2.3}, the $\VI$-module 
   \[ (\SSigma_2^s \cdots \SSigma_m^s V)_{(-, n_2, \ldots, n_m)} \] 
   is also generated in degree $\leqslant d$ and related in degree $\leqslant r$. It follows by Lemma \ref{lem: shift thm base case} and \eqref{eq:shifts and restrict} that $(\SSigma_1^s \SSigma_2^s \cdots \SSigma_m^s V)_{(-, n_2, \ldots, n_m)}$ is a $\VI$-module which is homology acyclic. 
 
   Let $p\geqslant 1$. Then
   \begin{align*}
      \HH_p^{\VI^m}( \SSigma_1^s \SSigma_2^s \cdots \SSigma_m^s V ) 
      \cong & \HH_p^\ver \HH_0^\hor ( \SSigma_1^s \SSigma_2^s \cdots \SSigma_m^s V ) & \mbox{by Lemma \ref{lem: spec seq degenerate}}\\
      \cong & \HH_p^\ver \HH_0^\hor ( \SSigma_2^s \cdots \SSigma_m^s \SSigma_1^s V ) & \mbox{by Lemma \ref{lem:modified shifts commute}}\\
      \cong & \HH_p^\ver  ( \SSigma_2^s \cdots \SSigma_m^s \HH_0^\hor(\SSigma_1^s V) ) & \mbox{by Lemma \ref{lem: hor and shift commute}}. 
   \end{align*}
   Hence by \eqref{eq:hor and ver homology}, it suffices to show that for each $n_1\in \N$, the $\VI^{m-1}$-module 
   \[ ( \SSigma_2^s \cdots \SSigma_m^s \HH_0^\hor(\SSigma_1^s V) )_{(n_1, -)} \]
   is homology acyclic. We have 
   \[  ( \SSigma_2^s \cdots \SSigma_m^s \HH_0^\hor(\SSigma_1^s V) )_{(n_1, -)} 
      = \SSigma_1^s \cdots \SSigma_{m-1}^s ((\HH_0^\hor(\SSigma_1^s V) )_{(n_1, -)}).  \]
   Therefore by inductive hypothesis, it suffices to prove that the $\VI^{m-1}$-module $(\HH_0^\hor(\SSigma_1^s V) )_{(n_1, -)}$ is generated in degree $\leqslant d$ and related in degree $\leqslant r$. 

   By Proposition \ref{prop:barshift2}, we know that $\SSigma_1^s V$ is generated in degree $\leqslant d$ and related in degree $\leqslant r$. Hence by Lemma \ref{lem: hor prin proj res}, the $\VI^{m-1}$-module $(\HH_0^\hor(\SSigma_1^s V) )_{(n_1, -)}$ is generated in degree $\leqslant d$ and related in degree $\leqslant r$.
\end{proof}

\subsection{The $\FI^m$-module analog}
Li and Yu proved that if $V$ is a finitely generated $\FI^m$-module over a noetherian commutative ring, then for all $s$ sufficiently large, the $\FI^m$-module $\Sigma_1^s \cdots \Sigma_m^s V$ is homology acyclic; see  \cite{liyu}*{Proposition 4.4 and Proposition 4.10}. Their result can be strengthened to the following.

\begin{theorem} \label{thm: shift thm for FIm}
   Let $m,d,r\in\mathbb{Z}$ with $m\geqslant 1$ and $d, r\geqslant -1$. Let $V$ be an $\FI^m$-module over a commutative ring $k$. Assume that $V$ is generated in degree $\leqslant d$ and related in degree $\leqslant r$. Then for any non-negative integer $s\geqslant d+r$, the $\FI^m$-module $\Sigma_1^s \cdots \Sigma_m^s V$ is homology acyclic.
\end{theorem}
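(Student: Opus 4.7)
The plan is to prove Theorem \ref{thm: shift thm for FIm} by mirroring the inductive proof of Theorem \ref{thm: shift}, replacing each $\VI$-theoretic ingredient with its $\FI$-theoretic counterpart. The statement involves only the natural shift functors $\Sigma_i$, so no hypothesis on the order of any group needs to be invertible in $k$; in particular the issue that forced us to use the modified shift functor $\SSigma_i$ in the $\VI$ setting does not arise, and there is no need to build an $\FI$-analog of $\SSigma_i$.

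For the base case $m=1$, I would first establish the $\FI$-analog of Lemma \ref{lem: shift thm base case}: a finitely generated $\FI$-module $V$ generated in degree $\leqslant d$ and related in degree $\leqslant r$ has $\Sigma^s V$ homology acyclic whenever $s\geqslant d+r$. The proof proceeds by induction on $d$, the case $d=-1$ being trivial. For the inductive step, consider the natural map $\varepsilon:V\to \Sigma V$ with kernel $KV$ and cokernel $DV$. Church--Ellenberg's regularity bound \cite{ce}*{Theorem A} gives $\deg KV \leqslant \max\{-1,d+r-1\}$, so $\Sigma^s KV=0$ for $s\geqslant d+r$. The $\FI$-analog of Lemma \ref{lem: Di of free}, namely $\Sigma M^{\FI}(n)\cong M^{\FI}(n)\oplus M^{\FI}(n-1)$ and hence $D M^{\FI}(n)\cong M^{\FI}(n-1)$ for $n\geqslant 1$, together with the right exactness of $D$, shows that $DV$ is generated in degree $\leqslant d-1$ and related in degree $\leqslant r$. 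Then the short exact sequence
\[ 0 \to \Sigma^s V \to \Sigma^{s+1}V \to \Sigma^s DV \to 0 \]
and the inductive hypothesis yield $\HH^{\FI}_i(\Sigma^s V)\cong \HH^{\FI}_i(\Sigma^{s+1}V)$ for all $i\geqslant 1$, and the Li--Yu sufficient-shift result \cite{liyu}*{Proposition 4.4} closes the induction.

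For the inductive step $m\geqslant 2$, I would follow the proof of Theorem \ref{thm: shift} verbatim, regarding $\FI^m$ as the product category $\FI\times \FI^{m-1}$ and using the horizontal and vertical homology spectral sequences from \cite{gta} (which in that paper are constructed in the $\FI^m$-setting to begin with). The required input lemmas are: (i) $\Sigma_i$ is exact and preserves generation and relation degrees (analog of Lemma \ref{lem: shift preserves d and r}, using $\Sigma_i M^{\FI^m}(\mathbf{n})$ being a direct sum of $M^{\FI^m}(\mathbf{n})$ and $M^{\FI^m}(\mathbf{n}-\mathbf{e}_i)$); (ii) $\Sigma_i\Sigma_j=\Sigma_j\Sigma_i$, which is immediate from $\iota_i\iota_j=\iota_j\iota_i$; (iii) $\HH^{\hor}_0\Sigma_i\cong \Sigma_i \HH^{\hor}_0$ for $i\geqslant 2$ (the analog of Lemma \ref{lem: hor and shift commute}, by the same direct computation); (iv) the restriction $(\Sigma_2^s\cdots\Sigma_m^s V)_{(-,n_2,\ldots,n_m)}$ remains generated in degree $\leqslant d$ and related in degree $\leqslant r$ as an $\FI$-module, via the $\FI$-version of \cite{gta}*{Lemma 2.3}; and (v) $(\HH^{\hor}_0(\Sigma_1^s V))_{(n_1,-)}$ is a free $\FI^{m-1}$-module generated in degree $\leqslant d+s$, but more importantly, appealing to the same reduction, what is really needed is that $\Sigma_1^s V$ itself remains generated in degree $\leqslant d$ and related in degree $\leqslant r$ (which reduces the problem, after the spectral sequence collapse, to a homology-acyclicity question about an $\FI^{m-1}$-module to which induction applies).

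The main obstacle is bookkeeping: verifying that each of the five input lemmas has a valid $\FI$-analog and that all the commutation and spectral-sequence machinery from \S\ref{sec:prelim}--\S\ref{sec: the functors Ki and Di} applies in the $\FI^m$-setting. None of this requires new ideas, and indeed most of the spectral-sequence apparatus of \cite{gta} was originally developed for $\FI^m$. The only genuinely substantive external inputs are Church--Ellenberg's regularity bound and Li--Yu's sufficient-shift theorem, both available in the literature.
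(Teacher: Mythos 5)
Your inductive step for $m\geqslant 2$ is exactly what the paper intends: it states that the proof is a verbatim analog of the proof of Theorem \ref{thm: shift} with the natural shift functors, and your input lemmas (i)--(v) are the right ones. The problem is your base case. Theorem \ref{thm: shift thm for FIm} is stated for an \emph{arbitrary} $\FI^m$-module over an \emph{arbitrary} commutative ring $k$ --- the paper explicitly emphasizes that neither finite generation of $V$ nor noetherianity of $k$ is assumed --- but your $m=1$ argument closes the induction on $d$ by appealing to the Li--Yu eventual-acyclicity result \cite{liyu}*{Proposition 4.4}, which is proved only for finitely generated $\FI^m$-modules over a noetherian ring. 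So your Nagpal-style stabilization argument ($\HH^{\FI}_i(\Sigma^s V)\cong \HH^{\FI}_i(\Sigma^{s+1}V)$ for $i\geqslant 1$, then quote eventual vanishing) only establishes the theorem under those extra hypotheses, and there is no obvious way to remove them within your scheme, since ``$\Sigma^n V$ is acyclic for $n\gg 0$'' is precisely the kind of statement that is not available without noetherian/finiteness input. The paper avoids this entirely: in place of Lemma \ref{lem: shift thm base case} it simply cites the known $m=1$ result (\cite{gan}*{Theorem 12}), which holds for all $\FI$-modules over any commutative ring and already gives the explicit bound $s\geqslant d+r$; the induction on $m$ then needs no finiteness anywhere. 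To repair your write-up, replace the Li--Yu/stabilization step by such a direct $m=1$ statement (or restate the theorem with finite generation and noetherianity, which would weaken it relative to the paper).

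Two smaller points. The multiplicities in your base case are off: for $\FI$ one has $\Sigma M^{\FI}(n)\cong M^{\FI}(n)\oplus M^{\FI}(n-1)^{\oplus n}$ and hence $DM^{\FI}(n)\cong M^{\FI}(n-1)^{\oplus n}$, not $M^{\FI}(n-1)$; this does not affect the generation/relation degree bounds you need. Likewise, in item (v) the analog of Lemma \ref{lem: hor prin proj res} gives that $(\HH^{\hor}_0(\Sigma_1^s V))_{(n_1,-)}$ is generated in degree $\leqslant d$ and related in degree $\leqslant r$ (since $\Sigma_1^s$ preserves these bounds on a presentation), not merely generated in degree $\leqslant d+s$; as you note, it is the former that the induction actually requires.
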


We do not require $V$ to be finitely generated or $k$ to be noetherian in Theorem \ref{thm: shift thm for FIm}.

We omit the proof of Theorem \ref{thm: shift thm for FIm} because it is an essentially verbatim analog of the proof of Theorem \ref{thm: shift}, noting that in place of Lemma \ref{lem: shift thm base case}, we use the $m=1$ case which is known for example from \cite{gan}*{Theorem 12}.

\section{Upper bound on regularity} \label{sec: upper bound}

\subsection{The function $\rho_m$}
We recall from \cite{gta}*{Definition 1.5} the function 
\[ \rho_m: \ZZ \times \ZZ \to \ZZ. \] 

\begin{definition} \label{def:rho}
   Let $m,d,r\in\mathbb{Z}$ with $m\geqslant 1$ and $d, r\geqslant -1$.

   If $m=1$, then let
   \[ \rho_m(d,r) = \max\{d, d+r-1\}. \]

   If $m\geqslant 2$ and $d=-1$, then let 
   \[ \rho_m(d,r) = -1. \]
   
   If $m\geqslant 2$ and $d\geqslant 0$, then let 
   \[ \rho_m(d,r) = \max\{ \rho_{m-1}(\rho'_m(d,r), \rho''_m(d,r)), 1+\rho_m(d-1,r) \} \]
   where 
   \begin{align*}
      \rho'_m(d,r) &= \max\{ 2+\rho_m(d-1, r), r \}  ,\\
      \rho''_m(d,r) &=  \max\{ 3+\rho_m(d-1, r), 4+ \rho_1(d,r) + \rho_{m-1}(d,r) \}.
   \end{align*}
\end{definition}

Later, we shall make use of the following properties of the functions $\rho'_m$ and $\rho''_m$.

\begin{lemma} \label{lem: rho' and rho'' inequalities}
   Let $m, d, r \in \Z$ with $m\geqslant 2$, $d\geqslant 0$, $r\geqslant -1$. Then we have:
   \begin{enumerate}[(a)]
      \item $\rho'_m(d,r) > d$.
      \item $\rho''_m(d,r) > r$.
   \end{enumerate}
\end{lemma}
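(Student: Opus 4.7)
The plan is to reduce both inequalities to a single, easily-established auxiliary bound, namely
\[
\rho_m(d, r) \geq d \qquad \text{for all } m \geq 1 \text{ and } d, r \geq -1.
\]
I would prove this by induction on $d$. The case $d = -1$ is immediate from Definition \ref{def:rho}, noting that in the $m = 1$ case the explicit formula gives $\rho_1(-1, r) = \max\{-1, r-2\} \geq -1$. For $m = 1$ and $d \geq 0$, the expression $\rho_1(d, r) = \max\{d, d+r-1\}$ makes the bound obvious. For $m \geq 2$ and $d \geq 0$, the recursion yields $\rho_m(d, r) \geq 1 + \rho_m(d-1, r)$, and the inductive hypothesis at $d-1$ propagates the bound upward.

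Granted this, part (a) splits in two. When $d = 0$, only the trivial value $\rho_m(-1, r) = -1$ is needed: $\rho'_m(0, r) \geq 2 + \rho_m(-1, r) = 1 > 0$. When $d \geq 1$, the auxiliary bound gives $\rho_m(d-1, r) \geq d-1$, and so $\rho'_m(d, r) \geq 2 + (d-1) = d+1 > d$.

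For part (b), I would keep only the second term in the maximum defining $\rho''_m$, i.e.\ use $\rho''_m(d, r) \geq 4 + \rho_1(d, r) + \rho_{m-1}(d, r)$. Since $\rho_{m-1}(d, r) \geq -1$ from Definition \ref{def:rho}, and since the formula for $\rho_1$ gives $\rho_1(d, r) \geq d + r - 1 \geq r - 1$ when $r \geq 1$ and $\rho_1(d, r) \geq d \geq 0$ in general, one obtains $\rho''_m(d, r) \geq r + 2 > r$ when $r \geq 1$, and $\rho''_m(d, r) \geq 3 > r$ when $r \in \{-1, 0\}$. No step here presents a genuine obstacle; the argument is a careful unwinding of the defining recursion, with only small separate treatments for the boundary cases $d = 0$ in (a) and $r \leq 0$ in (b).
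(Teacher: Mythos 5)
Your proof is correct and follows essentially the same route as the paper: both arguments isolate the term $2+\rho_m(d-1,r)$ in the maximum defining $\rho'_m$ for (a) and the term $4+\rho_1(d,r)+\rho_{m-1}(d,r)$ for (b), and both reduce everything to the bound $\rho_m(d,r)\geqslant d$ together with $\rho_{m-1}(d,r)\geqslant -1$. The only difference is that the paper simply quotes $\rho_m(d,r)\geqslant d$ from \cite{gta}*{Corollary 4.4}, whereas you reprove it by induction on $d$; your case splits at $d=0$ in (a) and $r\leqslant 0$ in (b) are harmless but unnecessary, since $\rho_m(-1,r)=-1$ and $\rho_1(d,r)\geqslant d+r-1$ already cover all cases uniformly.
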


\begin{proof}
   (a) We have: 
   \begin{align*}
      \rho'_m(d,r) &\geqslant 2 + \rho_m(d-1, r) \\
      &\geqslant 2 + (d-1) \quad\quad\quad \mbox{by \cite{gta}*{Corollary 4.4}} \\
      &> d.
   \end{align*}

   (b) We have: 
   \begin{align*}
      \rho''_m(d,r) &\geqslant 4+ \rho_1(d,r) + \rho_{m-1}(d,r) \\
      &\geqslant 4+(d+r-1)+(-1) \\
      &> r.
   \end{align*}
\end{proof}

\subsection{Main result}
Our main result is the following. 

\begin{theorem} \label{thm: regularity}
   Let $m, d, r\in \Z$ with $m\geqslant 1$ and $d,r\geqslant -1$. Let $V$ be a finitely generated $\VI^m$-module which is generated in degree $\leqslant d$ and related in degree $\leqslant r$. Then
   \[ \reg(V) \leqslant \rho_m(d,r). \]
\end{theorem}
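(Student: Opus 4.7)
The plan is a double induction with outer induction on $m$ and inner induction on $d$. The base case $m = 1$ is exactly \cite{gl_vi}*{Theorem 3.2}, giving $\reg(V) \leqslant \max\{d, d+r-1\} = \rho_1(d, r)$. When $m \geqslant 2$ and $d = -1$, we have $V = 0$, so $\reg(V) = -1 = \rho_m(-1, r)$. Fix now $m \geqslant 2$ and $d \geqslant 0$, and assume the theorem holds for all $\VI^{m-1}$-modules (outer hypothesis) and for all $\VI^m$-modules generated in degree $\leqslant d-1$ with relation degree $\leqslant r$ (inner hypothesis).

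Three ingredients drive the inductive step. First, viewing $\VI^m$ as $\VI \times \VI^{m-1}$, by \cite{gta}*{Lemma 2.3} the restrictions $V_{(-, n_2, \ldots, n_m)}$ and $V_{(n_1, -)}$ remain generated in degree $\leqslant d$ and related in degree $\leqslant r$ as $\VI$- and $\VI^{m-1}$-modules respectively. The $m = 1$ case and the outer hypothesis bound their regularities by $\rho_1(d, r)$ and $\rho_{m-1}(d, r)$, so Proposition \ref{prop: bound ti by 2i} yields
\[ t_i(V) \leqslant \max\{-1,\, 2i + \rho_1(d, r) + \rho_{m-1}(d, r)\} \quad \text{for all } i \in \N. \]
Second, by Lemma \ref{lem:degD2}, $D_m V$ is generated in degree $\leqslant d-1$ and related in degree $\leqslant r$, so the inner hypothesis yields $\reg(D_m V) \leqslant \rho_m(d-1, r)$. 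Third, Lemma \ref{lem: shift preserves d and r} shows that $\SSigma_m V$ has the same generation and relation degrees as $V$, and the shift theorem (Theorem \ref{thm: shift}) guarantees homology acyclicity after enough iterated modified shifts.

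The strategy is then to combine these via the long exact sequences \eqref{eq: les 1} and \eqref{eq: les 2} to bound $t_q(V)$ in terms of $t_q(\SSigma_m V)$, $t_q(K_m V)$, and $t_{q+1}(D_m V) \leqslant q + 1 + \rho_m(d-1, r)$. Iterating by passing to $\SSigma_m^s V$ for $s$ large, one is reduced to controlling an auxiliary $\VI^{m-1}$-module extracted by applying $\HH_0^\hor$ and restricting to the first coordinate (as in the proof of Theorem \ref{thm: shift}), to which the outer hypothesis applies.

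The main obstacle is the precise bookkeeping required to show that this auxiliary $\VI^{m-1}$-module has generation degree at most $\rho'_m(d, r)$ and relation degree at most $\rho''_m(d, r)$. The summands $2 + \rho_m(d-1, r)$ inside $\rho'_m$ and $3 + \rho_m(d-1, r)$ inside $\rho''_m$ arise from propagating the bound $\reg(D_m V) \leqslant \rho_m(d-1, r)$ through the long exact sequences with their index shifts; the summand $4 + \rho_1(d, r) + \rho_{m-1}(d, r)$ inside $\rho''_m$ is precisely the $i = 2$ case of the Proposition \ref{prop: bound ti by 2i} bound above; the summand $r$ inside $\rho'_m$ preserves the relation degree of $V$ via Lemma \ref{lem: shift preserves d and r}; and the outer summand $1 + \rho_m(d-1, r)$ in Definition \ref{def:rho} is the degree shift picked up when passing between $V$ and $\SSigma_m V$ through the short exact sequences \eqref{eq: ses 1} and \eqref{eq: ses 2}. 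Assembling all of these pieces and taking the overall maximum gives $\reg(V) \leqslant \rho_m(d, r)$.
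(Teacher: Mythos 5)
Your overall architecture matches the paper's proof: a nested induction (outer on $m$, inner on $d$), the exact sequences \eqref{eq: ses 1}--\eqref{eq: les 2} relating $V$, $K_iV$, $\SSigma_iV$, $D_iV$, the bound on $D_iV$ via Lemma \ref{lem:degD2} and the inner hypothesis, Proposition \ref{prop: bound ti by 2i} for the $t_2$-type bound, and the endgame in which Theorem \ref{thm: shift} makes an iterated shift homology acyclic and one peels off shifts one at a time. However, there is a genuine gap at the heart of the argument: you never bound the kernel term. The long exact sequences let you estimate $t_q(V)$ only if you already control $t_q(K_iV)$ for \emph{all} $q$, i.e.\ $\reg(K_iV)$. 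The sequences themselves, together with Proposition \ref{prop: bound ti by 2i}, only deliver $t_0(K_iV)\leqslant\rho'_m(d,r)$ and $t_1(K_iV)\leqslant\rho''_m(d,r)$; bootstrapping the same estimate for higher $q$ produces bounds growing like $2q$ plus a constant, which is useless for a regularity bound of the form $\rho_{m-1}(\rho'_m,\rho''_m)$. The paper closes this hole with a separate, nontrivial ingredient (Step~3): the $\VI^m$-analogues of \cite{gta}*{Lemma 3.1} and \cite{gta}*{Proposition 4.2}, which say in effect that for a module such as $K_iV$, which is torsion in the $i$-th direction, the regularity is controlled by $\rho_{m-1}$ applied to its $t_0$ and $t_1$; and for $\VI$ this requires Nagpal's result \cite{na}*{Lemma 5.10} in place of the $\FI$-argument, precisely because the degree-shift inequality of \cite{gta}*{Lemma 3.4} fails for $\SSigma_i$ (Remark \ref{rem: pf in gta}(b)). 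Your substitute --- ``an auxiliary $\VI^{m-1}$-module extracted by applying $\HH_0^\hor$ and restricting to the first coordinate, as in the proof of Theorem \ref{thm: shift}'' --- is not the mechanism by which the outer hypothesis enters here: $K_iV$ is a $\VI^m$-module, the $(m-1)$-hypothesis does not apply to it directly, and no argument is given that your horizontal-homology construction has the generation/relation degrees you assert or that bounding it bounds $\reg(K_iV)$.

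A secondary point: in the endgame you iterate only $\SSigma_m$, but Theorem \ref{thm: shift} gives acyclicity only for $\SSigma_1^s\cdots\SSigma_m^s V$, shifted in \emph{every} coordinate. The descent must therefore be run coordinate by coordinate, applying the ``if $\reg(\SSigma_iW)\leqslant\rho_m(d,r)$ then $\reg(W)\leqslant\rho_m(d,r)$'' claim for each $i$ in turn (using Lemma \ref{lem: shift preserves d and r} to keep the hypotheses), exactly as in Step 4 of the paper. Your bookkeeping for the sources of the summands in $\rho'_m$, $\rho''_m$, and the outer $1+\rho_m(d-1,r)$ term is essentially correct, but without the Step-3 regularity bound on $K_iV$ the induction does not close.
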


The $\FI^m$-module analog of Theorem \ref{thm: regularity} was proved in \cite{gta}*{Theorem 1.6} without assuming that $V$ is finitely generated or $\Bbbk$ is noetherian. In the following remark, we discuss where the method of proof of \cite{gta}*{Theorem 1.6} fails for $\VI^m$-modules. 

\begin{remark} \label{rem: pf in gta}
   \begin{enumerate}[(a)]
      \item We write $\Sigma_1, \ldots, \Sigma_m$ for the shift functors on $\FI^m\Mod$. For any $i\in \{1, \ldots, m\}$ and $\FI^m$-module $V$, there is a natural homomorphism $V\to \Sigma_i V$ whose cokernel $D_i V$ satisfies 
      \[ t_0(D_i V)\leqslant \max\{-1, t_0(V)-1\}. \] 
      Similarly for $\VI^m$-modules with $\SSigma_i$ in place of $\Sigma_i$. This gives a way to prove results on (certain classes of) $\FI^m$-modules or $\VI^m$-modules $V$ by induction on $t_0(V)$. 

      \item We recall from \cite{gta}*{Lemma 3.4} that for any $\FI^m$-module $V$, we have 
      \[ \deg(V) \leqslant 1 + \max\left\{\deg(\Sigma_i V) : i=1, \ldots, m\right\}. \]  
      This may be \emph{false} for a $\VI^m$-module $V$ with $\SSigma_i$ in place of $\Sigma_i$ (even when $m=1$). This lemma is used in Step 4 in the proof of \cite{gta}*{Theorem 1.6} in an essential way. It is also used in the proof of \cite{gta}*{Corollary 3.11} which says that for any $\FI^m$-module $V$ with $\deg(V)<\infty$, we have 
      \[ \reg(V)\leqslant \deg(V). \] 
      However, only the $m=1$ case of \cite{gta}*{Corollary 3.11} is actually used later (in the proof of \cite{gta}*{Proposition 4.2}), and this $m=1$ case for finitely generated $\VI$-modules has been proved by Nagpal \cite{na}*{Lemma 5.10}. 

      \item The method of Gan and Li in \cite{gl_product} for proving finiteness of regularity is not applicable to $\VI^m$-modules.
   \end{enumerate}
\end{remark}

We now prove our main result. We shall refer the reader to \cite{gta} when the arguments for $\VI^m$-modules are essentially the same as the arguments for $\FI^m$-modules.

\begin{proof}[Proof of Theorem \ref{thm: regularity}]
   For any integers $m\geqslant 1$ and $d\geqslant -1$, denote by $\mathfrak{T}(m, d)$ the following statement:
   \begin{equation*}
      \parbox{.85\textwidth}{\it For any integer $r\geqslant -1$, if $V$ is a finitely generated $\VI^m$-module which is generated in degree $\leqslant d$ and related in degree $\leqslant r$, then $\reg(V)\leqslant \rho_m(d,r)$.}
   \end{equation*}
   We shall prove that $\mathfrak{T}(m,d)$ is true by nested induction. The outer induction is over $m$ and the inner induction is over $d$. 

   By \cite{gl_vi}*{Theorem 3.2}, we know that $\mathfrak{T}(1,d)$ is true for all $d\geqslant -1$. It is easy to see that $\mathfrak{T}(m,-1)$ is true for all $m\geqslant 1$. 

   Now fix $m\geqslant 2$ and $d\geqslant 0$. Assume that:
   \begin{itemize}
      \item $\mathfrak{T}(m-1, c)$ is true for all $c\geqslant -1$;
      \item $\mathfrak{T}(m, d-1)$ is true.
   \end{itemize}
   To prove that $\mathfrak{T}(m,d)$ is true, fix an integer $r\geqslant -1$ and let $V$ be any finitely generated $\VI^m$-module which is generated in degree $\leqslant d$ and related in degree $\leqslant r$. We want to show that 
   \[ \reg(V) \leqslant \rho_m(d,r). \] 
   We do this in several steps. In the following, let $i\in \{1, \ldots, m\}$.
 
   \bigskip 
   
   {\bf Step 1. Bound $t_2(\SSigma_i V)$.}
   
   \medskip 

   By Lemma \ref{lem: shift preserves d and r}, we know that $\SSigma_i V$ is generated in degree $\leqslant d$ and related in degree $\leqslant r$. 

   Let $(n_1, n_2, \ldots, n_m)\in \N^m$. We deduce by \cite{gta}*{Lemma 2.3} that $\SSigma_i V_{(-, n_2, \ldots, n_m)}$ and $\SSigma_i V_{(n_1, -)}$ are generated in degree $\leqslant d$ and related in degree $\leqslant r$. Hence by $\mathfrak{T}(1,d)$ and $\mathfrak{T}(m-1,d)$, we have: 
   \begin{align*}
      \reg\left( \SSigma_i V_{(-,n_2,\ldots, n_m)} \right) &\leqslant \rho_1 (d,r),\\
      \reg\left( \SSigma_i V_{(n_1,-)}\right) &\leqslant \rho_{m-1} (d,r).
   \end{align*}
   Applying Proposition \ref{prop: bound ti by 2i} to $\SSigma_i V$ gives:
   \begin{equation} \label{eq:t2bound}
      t_2(\SSigma_i V) \leqslant 4 + \rho_1(d,r) + \rho_{m-1}(d,r).
   \end{equation}

   \bigskip 

   {\bf Step 2. Bound $t_0(K_i V)$ and $t_1(K_i V)$.}

   \medskip 
   
   Let $q\in \N$. Then we have:
   \begin{align*}
      t_q(K_i V)  &\leqslant \max\{ t_q(V), t_{q+1}(V/K_i V) \} & \mbox{by \eqref{eq: ses 1}} \\
      &\leqslant \max\{ t_q(V), t_{q+1}(\SSigma_i V), t_{q+2}(D_i V) \} & \mbox{by \eqref{eq: les 2}}. 
   \end{align*}

   When $q=0$, we have:
   \[  t_0(K_i V) \leqslant \max\{ t_0(V), t_1(\SSigma_i V), t_2(D_i V) \}. \]
   By Lemma \ref{lem: shift preserves d and r}, Lemma \ref{lem:degD2}, and $\mathfrak{T}(m, d-1)$, we get:
   \[ t_0(K_i V)  \leqslant \max\{ d, r, 2+\rho_m(d-1, r) \}. \]
   Hence by Definition \ref{def:rho} and Lemma \ref{lem: rho' and rho'' inequalities}:
   \begin{equation} \label{eq: step 2 bound t0KV}
      t_0(K_i V)  \leqslant \rho'_m(d, r). 
   \end{equation}

   When $q=1$, we have:
   \[ t_1(K_i V) \leqslant \max\{ t_1(V), t_2(\SSigma_i V), t_3(D_i V) \}. \]
   By \eqref{eq:t2bound} from Step 1, Lemma \ref{lem:degD2}, and $\mathfrak{T}(m, d-1)$, we get:
   \[  t_1(K_i V) \leqslant  \max\{ r, 4+\rho_1(d,r)+\rho_{m-1}(d,r), 3+\rho_m(d-1, r) \}. \]
   Hence by Definition \ref{def:rho} and Lemma \ref{lem: rho' and rho'' inequalities}:
   \begin{equation} \label{eq: step 2 bound t1KV}
      t_1(K_i V) \leqslant \rho''_m(d, r).
   \end{equation}

   \bigskip 

   {\bf Step 3. Bound $\reg(K_i V)$.}

   \medskip 

   Similarly to Step 3 in the proof of \cite{gta}*{Theorem 1.6}, we have: 
     \begin{equation} \label{eq:bound reg of KV}
      \reg(K_i V) \leqslant \rho_{m-1}(\rho'_m(d,r), \rho''_m(d,r)). 
   \end{equation}

   The proof for finitely generated $\VI^m$-modules uses:
   \begin{itemize}
      \item \eqref{eq: step 2 bound t0KV} and \eqref{eq: step 2 bound t1KV} from Step 2;
      \item the $\VI^m$-analog of \cite{gta}*{Lemma 3.1}, which is proved in the same way with $\VI$ in place of $\FI$;
      \item the $\VI^m$-analog of \cite{gta}*{Proposition 4.2}, which is proved in the same way with $\VI$ in place of $\FI$, and with \cite{na}*{Lemma 5.10} in place of \cite{gta}*{Corollary 3.11}.
   \end{itemize}

   \bigskip 

   {\bf Step 4. Bound $\reg(V)$.}

   \medskip 

   We remind the reader that \eqref{eq:bound reg of KV} in Step 3 is applicable to any $\VI^m$-module $V$ which is generated in degree $\leqslant d$ and related in degree $\leqslant r$.

   We claim the following:
   \begin{equation*}
      \parbox{.85\textwidth}{\it Let $W$ be a finitely generated $\VI^m$-module which is generated in degree $\leqslant d$ and related in degree $\leqslant r$.  
      Assume that $\reg(\SSigma_i W) \leqslant \rho_m(d,r)$. Then $\reg(W) \leqslant \rho_m(d,r)$.}
   \end{equation*}

   To prove the claim, let $q\in \N$. Then: 
   \begin{align*}
     t_q(W) &\leqslant \max\{ t_q(K_i W), t_q(W/K_i W) \}  & \mbox{by \eqref{eq: les 1}}\\
     &\leqslant \max\{ t_q(K_i W), t_q(\SSigma_i W), t_{q+1}(D_i W) \} & \mbox{by \eqref{eq: les 2}}\\
     &\leqslant  q + \max\{ \reg(K_i W), \reg(\SSigma_i W), 1+\reg(D_i W) \}. &
   \end{align*}
   By \eqref{eq:bound reg of KV} from Step 3, Lemma \ref{lem:degD2}, and $\mathfrak{T}(m,d-1)$, it follows that: 
   \[ t_q(W) \leqslant  q + \max\{ \rho_{m-1}(\rho'_m(d,r), \rho''_m(d,r)), \rho_m(d,r), 1+\rho_m(d-1, r) \}. \]
   Hence by Definition \ref{def:rho}:
   \[ t_q(W) \leqslant q + \rho_m(d,r). \]
   This proves the claim.

   Now let $s=\max\{0, d+r\}$. Then
   \begin{align*}
      \reg(\SSigma^s_1 \SSigma^s_2 \cdots \SSigma^s_m V) &= t_0( \SSigma^s_1 \SSigma^s_2 \cdots \SSigma^s_m V ) & \mbox{by Theorem \ref{thm: shift}}\\
      &\leqslant d & \mbox{by Lemma \ref{lem: shift preserves d and r}} \\
      &\leqslant \rho_m(d,r) & \mbox{by \cite{gta}*{Corollary 4.4}}.
   \end{align*} 
   Keeping Lemma \ref{lem: shift preserves d and r} in mind, we can therefore apply the above claim successively to see that:
   \begin{align*}
      & \quad \reg(\SSigma^s_1 \SSigma^s_2 \cdots \SSigma^s_m V) \leqslant \rho_m(d,r) \\
      \Longrightarrow & \quad \reg(\SSigma^{s-1}_1 \SSigma^s_2 \cdots \SSigma^s_m V) \leqslant \rho_m(d,r) \\ 
      \Longrightarrow & \quad \reg(\SSigma^{s-2}_1 \SSigma^s_2 \cdots \SSigma^s_m V) \leqslant \rho_m(d,r) \\ 
      & \qquad\qquad \vdots \\
      \Longrightarrow & \quad \reg(\SSigma^2_m V) \leqslant \rho_m(d,r) \\
      \Longrightarrow & \quad \reg(\SSigma_m V) \leqslant \rho_m(d,r) \\
      \Longrightarrow & \quad \reg(V) \leqslant \rho_m(d,r).
   \end{align*}
   
   This completes the proof of the theorem. 
\end{proof}

\begin{bibdiv}
\begin{biblist}
   
\bib{ce}{article}{
   author={Church, Thomas},
   author={Ellenberg, Jordan S.},
   title={Homology of FI-modules},
   journal={Geom. Topol.},
   volume={21},
   date={2017},
   number={4},
   pages={2373--2418},
}

\bib{gan}{article}{
   author={Gan, Wee Liang},
   title={A long exact sequence for homology of FI-modules},
   journal={New York J. Math.},
   volume={22},
   date={2016},
   pages={1487--1502},
}

\bib{gl_coinduction}{article}{
   author={Gan, Wee Liang},
   author={Li, Liping},
   title={Coinduction functor in representation stability theory},
   journal={J. Lond. Math. Soc. (2)},
   volume={92},
   date={2015},
   number={3},
   pages={689--711},
}

\bib{gl_product}{article}{
   author={Gan, Wee Liang},
   author={Li, Liping},
   title={Castelnuovo-Mumford regularity of representations of certain
   product categories},
   journal={J. Algebra},
   volume={555},
   date={2020},
   pages={246--264},
}

\bib{gl_vi}{article}{
   author={Gan, Wee Liang},
   author={Li, Liping},
   title={Bounds on homological invariants of VI-modules},
   journal={Michigan Math. J.},
   volume={69},
   date={2020},
   number={2},
   pages={273--284},
}

\bib{gta}{article}{
   author={Gan, Wee Liang},
   author={Ta, Khoa},
   title={Bounding regularity of ${\rm FI}^m$-modules},
   journal={J. Algebra},
   volume={685},
   date={2026},
   pages={86--111},
}

\bib{liyu}{article}{
   author={Li, Liping},
   author={Yu, Nina},
   title={${\rm FI}^m$-modules over Noetherian rings},
   journal={J. Pure Appl. Algebra},
   volume={223},
   date={2019},
   number={8},
   pages={3436--3460},
}

\bib{na}{article}{
   author={Nagpal, Rohit},
   title={VI-modules in nondescribing characteristic, part I},
   journal={Algebra Number Theory},
   volume={13},
   date={2019},
   number={9},
   pages={2151--2189},
}

\bib{sams}{article}{
   author={Sam, Steven V.},
   author={Snowden, Andrew},
   title={Gr\"obner methods for representations of combinatorial categories},
   journal={J. Amer. Math. Soc.},
   volume={30},
   date={2017},
   number={1},
   pages={159--203},
}

\end{biblist}
\end{bibdiv}

\end{document}